\theoremstyle{plain}
\newtheorem{mythm}{Theorem}
\newtheorem*{thm1}{Theorem 1}
\newtheorem*{thm pos}{Theorem 1.1 (Existence of dual Specht filtrations)}
\newtheorem*{thm neg}{Conjecture (Non-existence of dual Specht filtrations)}
\newtheorem*{thm1*}{Theorem 1, Version 2}
\newtheorem{prop}{Proposition}
\newtheorem*{thm}{Theorem}
\newtheorem{lem}[prop]{Lemma}
\newtheorem{cor}[prop]{Corollary}
\newtheorem*{cor*}{Corollary \ref{cor: thm 1}}
\theoremstyle{definition}
\newtheorem*{ex}{Example}
\newtheorem{dfprop}[prop]{Definition/Proposition}
\theoremstyle{remark}
\newtheorem*{rk}{Remark}
\newcommand{\arcd}{\ar@{-}@/_/} 
\newcommand{\arcu}{\ar@{-}@/^/} 
\newcommand{\arcU}{\ar@{-}@/^10pt/} 
\newcommand{\tra}{\ar@{-}} 
\newcommand{\xysmall}{\xymatrixrowsep{5pt}\xymatrixcolsep{10pt}\xymatrix} 
\renewcommand{\top}{\text{top}}
\newcommand{\bottom}{\text{bottom}}
\newcommand{\tensorover}[1]{\underset{\scriptscriptstyle #1}{\otimes}}
\renewcommand{\mod}{\text{\textnormal{mod}}}
\renewcommand{\th}{\text{\textnormal{\small th}}\hspace*{.6ex}}
\begin{document}

\title{Restricting cell modules of partition algebras}
\author{Inga Paul}
\address{Institut f\"ur Algebra und Zahlentheorie, Universit\"at Stuttgart\newline \phantom{PF} Pfaffenwaldring 57, 70569 Stuttgart, Germany}
\email{inga.paul@mathematik.uni-stuttgart.de}
\subjclass[2010]{16G10, 20C30 (primary) and 05E10, 20B30, 20G05, 81R05  (secondary)} 
\keywords{ partition algebra, cellular algebra, restriction, Specht filtration, Foulkes module, generalised Foulkes module}

\begin{abstract}
The restriction of a (dual) Specht module to a smaller symmetric group has a filtration by (dual) Specht modules of this smaller group.  In the cellular structure of the group algebra of the symmetric group, the cell modules are exactly the (dual) Specht modules. The partition algebra is a cellular algebra containing the group algebra of the symmetric group. In this article, we study the structure of the restriction of a cell module to the group algebra of a symmetric group (with smaller index) and find conditions for the restriction to possess a (dual) Specht filtration. 
\end{abstract}

\maketitle

\section{Introduction}\label{sec: introduction}
The partition algebra was independently defined by Martin \cite{PMartin} and Jones \cite{Jones} to describe the Potts model in statistical mechanics.
In representation theory, the partition algebra $P_k(r,\delta)$ arises as a diagram algebra containing the Temperley-Lieb and Brauer algebras. It has nice structural properties such as being cellular \cite{Xi}   
and quasi-hereditary if and only if $\delta \neq 0$ and $\text{\textnormal{char}} k = 0$ or $\text{\textnormal{char}} k > r$ \cite{KXqh}. For $r \geq n$, there is Schur-Weyl duality between $k\Sigma_n$ and $P_k(r,n)$, see for example \cite{HalversonRam}. 

An enhanced cellular structure, called \emph{cellularly stratified}, ensures that the cell modules of $P_k(r,\delta)$, with $\delta \neq 0$, arise from cell modules of the group algebras of symmetric groups with index $n \leq r$ by induction, i.e. they are induced (dual) Specht modules \cite{HHKP}. The same holds for the cell modules of the Brauer algebra. When restricted to a group algebra of a symmetric group,  the cell modules of a Brauer algebra admit a filtration by (dual) Specht modules, as shown in \cite{P}. The approach used for Brauer algebras is not applicable for the partition algebra, since it is based on the fact that, in a Brauer diagram, each dot is connected to exactly one other dot. For the partition algebra, there is no such regularity, which makes the problem more complex. 

In this article, we consider the question when the restriction $e_lAe_n \tensorover{e_nAe_n} S_\nu$ of a cell module $Ae_n \tensorover{e_nAe_n} S_\nu$ of the partition algebra $A=P_k(r,\delta)$ to a group algebra of a symmetric group $\Sigma_l$ with $0\leq l \leq r$ admits a dual Specht filtration. The main result is the following.

\begin{mythm}\label{main theorem}
Let $A$ be the partition algebra $P_k(r,\delta)$ with $\delta \neq 0$. Let $0\leq n \leq l \leq r$. Let $\nu$ be a partition of $n$. 
The restriction $e_lAe_n \tensorover{e_nAe_n}S_\nu$ of the cell module $Ae_n\tensorover{e_nAe_n}S_\nu$ to $k\Sigma_l -\mod$ admits a dual Specht filtration if the following holds
\begin{enumerate}
\item\label{main thm assumption char} $\text{\textnormal{char}} k = 0$ or $\text{\textnormal{char}} k > \lfloor \frac{l-n}{3} \rfloor$ \end{enumerate}
 and
\begin{enumerate}\setcounter{enumi}{1}
\item\label{main thm assumption gFm} for all $i \in \{1,...,l\}, \alpha_i \in \{1,...,\lfloor \frac{l}{i}\rfloor\}$ and for all partitions $\lambda^i$ of $\alpha_i$, the generalised Foulkes modules with inner twists $$k\Sigma_{i\alpha_i} \tensorover{k(\Sigma_i \wr \Sigma_{\alpha_i})} S_{\lambda^i} \simeq (k\Sigma_{i\alpha_i} \tensorover{k\Sigma_{(i^{\alpha_i})}} k) \tensorover{k\Sigma_{\alpha_i}} S_{\lambda^i}$$ admit filtrations by dual $k\Sigma_{i\alpha_i}$-Specht modules.
\end{enumerate}
\end{mythm}
The elements $e_n$ and $e_l$ are special idempotents needed for the cellularly stratified structure of $A$. They are defined in Subsection \ref{subsec: PA structure}.  Generalised Foulkes modules with inner twists are defined in Section \ref{sec: result} and the bimodule structure of the permutation module $k\Sigma_{i\alpha_i} \tensorover{k\Sigma_{(i^{\alpha_i})}} k$ is defined in Subsection \ref{subsec: permutation bimodule}.

Section \ref{sec: definition} contains a definition of the partition algebra, some notation and an explanation of the cellular and cellularly stratified structures. 

The technical main result, Theorem \ref{TMR}, is the structural study of the $(k\Sigma_l,k\Sigma_n)$-bimodule $e_l(A/Ae_{n-1}A)e_n$, which is the content of Section \ref{sec: restriction cell modules}. It is the main ingredient for the proof of Theorem \ref{main theorem} in Section \ref{sec: result}. In Subsection \ref{subsec: notation}, we define some notation and prove the first result - a decomposition of $e_l(A/Ae_{n-1}A)e_n$ into direct summands, where each summand is indexed by an equivalence class of partial diagrams (with exactly $n$\, \emph{labelled} parts). We fix such a partial diagram and continue with the study of the corresponding summand. We realise this summand as an induced exterior tensor product of modules corresponding to the labelled and the unlabelled dots respectively, which we study individually in Subsections \ref{subsec: Foulkes module} and \ref{subsec: permutation bimodule}. We show that the module corresponding to the unlabelled dots is isomorphic to an induced exterior tensor product of \emph{Foulkes modules} $k\Sigma_{am} \tensorover{k(\Sigma_a \wr \Sigma_m)} k$. If $\text{\textnormal{char}}k > m$ (or $\text{\textnormal{char}} k = 0$), we can show that the Foulkes module $k\Sigma_{am} \tensorover{k(\Sigma_a \wr \Sigma_m)} k$ has a dual Specht filtration; in smaller characteristic this is an open problem. This leads to assumption (\ref{main thm assumption char}) in Theorem \ref{main theorem}. In Subsection \ref{subsec: permutation bimodule}, we show that the module corresponding to the labelled dots is isomorphic to an induced exterior tensor product of Young permutation modules $M^{(i^{\alpha_i})} = k\Sigma_{i\alpha_i} \tensorover{k\Sigma_{(i^{\alpha_i})}} k$ with a $(k\Sigma_{i\alpha_i},k\Sigma_{\alpha_i})$-bimodule structure.
In Subsection \ref{subsec: eAe Specht}, we show that $e_l(A/Ae_{n-1}A)e_n$ admits a dual Specht filtration as left $k\Sigma_l$-module, provided $\text{\textnormal{char}}k$ is large enough.\\

The proof of Theorem \ref{main theorem} can be found in Section \ref{sec: result}. It uses the results from the previous section, the result on Brauer algebras from \cite{P} and the characteristic-free version of the Littlewood-Richardson rule \cite{JP}. We prove the theorem in Subsection \ref{subsec: proof positive} and state another version of it with nicer, yet stronger, assumptions as a corollary. 

\begin{cor*}
Let $A$ be the partition algebra $P_k(r,\delta)$, $\delta \neq 0$ and let $0\leq n \leq l \leq r$. Let $\nu$ be a partition of $n$. 
The restriction $e_lAe_n \tensorover{e_nAe_n}S_\nu$ of the cell module $Ae_n\tensorover{e_nAe_n}S_\nu$ to $k\Sigma_l -\mod$ admits a dual Specht filtration if $\text{\textnormal{char}} k = 0$ or $\text{\textnormal{char}} k > \lfloor \frac{l}{2} \rfloor$.
\end{cor*}

In general, dual Specht filtrations may not exist. Some evidence and an idea how to construct a counterexample are given in Subsection \ref{subsec: proof negative}. Subsection \ref{subsec: necessity of conditions} shows that some of the assumptions made in Theorem \ref{main theorem} are necessary. \\

This article is the first of two articles arising from the author's PhD thesis \cite{diss}. The aim of the thesis was to extend the construction of $k\Sigma_r$-permutation modules to permutation modules for a large class of diagram algebras, as it was done by Hartmann and Paget for Brauer algebras \cite{HP}, such that these `new' permutation modules satisfy similar properties to those of $k\Sigma_r$-permutation modules. A list of assumptions which the algebra $A$ has to satisfy was given; amongst others, it is assumed that the cell modules of the cellularly stratified algebra $A$ admit a dual Specht filtration when restricted to the module category of one of the input algebras $B_l$. In the case $A=P_k(r,\delta)$, these input algebras are exactly the group algebras of the symmetric groups $\Sigma_l$ with $0 \leq l \leq r$. The remaining assumptions are relatively easy to show for $A=P_k(r,\delta)$, which makes the article at hand the crucial ingredient for the definition of permutation modules for partition algebras. However, if the restriction of a cell module fails to admit a dual Specht filtration, we can still define permutation modules. They might not be relative projective with respect to the category of cell filtered modules anymore, as we show in \cite{permutationmodules}. This is the second article (in progress) arising from the author's PhD thesis, which is devoted to the general construction of permutation modules for cellularly stratified algebras, including a section about the partition algebra. Together with the article at hand, this makes the construction of permutation modules for partition algebras possible.

\section[Definition and Structure]{Definition of the Partition Algebra and its Structure}\label{sec: definition} 
\subsection{Definition}
Let $k$ be an algebraically closed field of arbitrary characteristic and let $\delta \in k$. Let $r \in \mathbb{N}$.

The \textit{partition algebra} $P_k(r,\delta)$ is the associative $k$-algebra with basis consisting of set partitions of $\{1,...,r,1',...,r'\}$. A \textit{set partition} of a set $X$ is a collection of pairwise disjoint subsets $X_i \subseteq X$, such that $\coprod X_i = X$.  Regarding $P_k(r,\delta)$ as a diagram algebra, this means that the basis consists of diagrams with two rows of $r$ dots each (top row labelled by $1,...,r$ and bottom row labelled by $1',...,r'$), where dots which belong to the same part of the partition are connected transitively. Note that this description is not unique.  
For example, the set partition $$\{\{1,2',3'\},\{2\},\{3,4,5,5',6'\},\{6,4'\},\{1'\} \}$$
corresponds, among others, to the diagram $$\xysmall{\bullet^1 \tra[dr] & \bullet^2 & \bullet^3 \tra[r] & \bullet^4 \tra[r] & \bullet^5 \tra[d] & \bullet^6 \tra[dll] \\
\bullet_{1'} & \bullet_{2'} \tra[r] & \bullet_{3'} & \bullet_{4'} & \bullet_{5'} \tra[r] & \bullet_{6'}}$$ 
as well as to the diagram $$\begin{minipage}{4cm}
\xysmall{\bullet^1 \tra[dr]\tra[drr] & \bullet^2 & \bullet^3 \tra[r] \tra[drr] & \bullet^4 \tra[r] & \bullet^5 \tra[dr] & \bullet^6 \tra[dll] \\
\bullet_{1'} & \bullet_{2'}  & \bullet_{3'} & \bullet_{4'} & \bullet_{5'} & \bullet_{6'}}
\end{minipage}$$
Multiplication is given by concatenation of diagrams, i.e. writing one diagram on top of the other, identifying the bottom row of the upper with the top row of the lower diagram and following the lines from top to bottom or within one row. Parts which have no dot in top or bottom row are replaced by a factor $\delta$. This multiplication is independent of the choice of diagram. 
We usually omit the labels $1,..,r,1',...,r'$. 

\begin{ex}\label{ex: PA multiplication}
 Let $$ x=
\begin{minipage}[c]{4cm} 
\xysmall{\bullet \tra[dr] & \bullet & \bullet \tra[r]\tra[drr] & \bullet \tra[r] & \bullet & \bullet \tra[dll] \\
\bullet & \bullet \tra[r] & \bullet & \bullet & \bullet \tra[r] & \bullet}
\end{minipage}$$ and $$
y=
\begin{minipage}[c]{4cm}
\xysmall{\bullet  & \bullet \tra[d] & \bullet \tra[r] & \bullet  & \bullet \tra[d] & \bullet \\
\bullet & \bullet \tra[r] & \bullet & \bullet \arcu[rr] & \bullet  & \bullet}\end{minipage}$$
Then 
$$xy= 
\begin{minipage}[c]{4.1cm}
\xysmall{\bullet \tra[dr] & \bullet & \bullet \tra[r]\tra[drr] & \bullet \tra[r] & \bullet & \bullet \tra[dll] \\
\bullet \ar@{=}[d] & \bullet \tra[r]\ar@{=}[d] & \bullet\ar@{=}[d] & \bullet\ar@{=}[d] & \bullet\ar@{=}[d] \tra[r] & \bullet\ar@{=}[d] \\
\bullet  & \bullet \tra[d] & \bullet \tra[r] & \bullet  & \bullet \tra[d] & \bullet \\
\bullet & \bullet \tra[r] & \bullet & \bullet \arcu[rr] & \bullet  & \bullet}
\end{minipage}
= \delta \cdot
\begin{minipage}[c]{4cm}
\xysmall{\bullet \arcu[rrrrr]\tra[dr] & \bullet & \bullet \tra[r]\tra[drr] & \bullet \tra[r]  & \bullet  & \bullet \\
\bullet & \bullet \tra[r] & \bullet & \bullet \arcu[rr] & \bullet  & \bullet}
\end{minipage}$$
\end{ex}

 We choose to write all diagrams as follows. First, connect dots of the top row belonging to the same part from left to right. Do the same in the bottom row. Parts which contain both top and bottom row dots will be connected via the respective leftmost dots. Parts connecting top and bottom row are often called \textit{propagating parts} in the literature. The number $\#_p(d)$ of propagating parts of a diagram $d$ is called \textit{propagating number}. We call the actual line connecting a top and a bottom row dot \textit{propagating line}.  We denote the top row of a diagram $d$ by $\top(d)$, its bottom row by $\bottom(d)$ and the permutation induced by the propagating lines by $\Pi(d) \in \Sigma_{\#_p(d)}$. Note that multiplication of diagrams cannot increase the propagating number, since a propagating part of $x\cdot y$ connects $\top(x)$ to $\bottom(y)$ via $\bottom(x)=\top(y)$, hence $\#_p(x\cdot y) \leq \min \{\#_p(x),\#_p(y)\}$. The unit element of $P_k(r,\delta)$ is given by the set partition $\{\{1,1'\},\{2,2'\},...,\{r,r'\}\} = \begin{minipage}{3.5cm}
\xysmall{\bullet \tra[d] & \bullet \tra[d] & ... & \bullet \tra[d] \\
\bullet & \bullet & ... & \bullet} 
\end{minipage}$.

A diagram consisting of only one row with $r$ dots and arbitrary connections is called \textit{partial diagram}. We have to distinguish certain parts from others; we say they are \textit{labelled} and write the dots as empty circles $\circ$ instead of dots $\bullet$. When we complete a partial diagram to a full diagram with two rows of dots, the labelled parts become propagating.  We count the labelled parts from left to right, according to the leftmost dot of the part. 
Let $V_n$ be the vector space with basis all partial diagrams with exactly $n$ labelled parts (and possibly further unlabelled parts). 
For example, $\xymatrixcolsep{10pt}\xymatrix{\bullet \arcu[rr] & \circ & \bullet \tra[r] & \bullet & \circ \tra[r] & \circ & \bullet}$ is a basis element of $V_2$ in case $r=7$; the labelled singleton $\circ$ is the first labelled part, the part $\circ - \circ$ is the second.

\subsection{Structural Properties}\label{subsec: PA structure}
The group algebra $k\Sigma_r$ is a unitary subalgebra of $P_k(r,\delta)$, where a permutation $\pi \in \Sigma_r$ corresponds to a diagram where all parts are of size $2$ and propagating, i.e. each dot $i$ of the top row is connected to exactly one dot $\pi(i)$ of the bottom row. 
For $l < r$, we have different embeddings of $k\Sigma_l$ into $P_k(r,\delta)$ 
$$\xymatrix{& k\Sigma_r \arcu@{^{(}->}[dr] \\
k\Sigma_l \arcu@{^{(}->}[ur] \arcu@{^{(}->}[dr] && P_k(r,\delta)\\ 
& P_k(l,\delta) \arcu@{^{(}->}_-\iota[ur]}$$
where the inclusion $\iota: P_k(l,\delta) \hookrightarrow P_k(r,\delta)$ is given as follows. Let $b$ be a diagram in $P_k(l,\delta)$. Add dots $l+1,...,r$ to the top row and dots $(l+1)',...,r'$ to the bottom row of $b$ and attach these new dots to the $l$\th\hspace{-0.5ex}, respectively $l'$\th\hspace{-0.5ex}, dot of $b$. Throughout this article, we use the embedding $\hat{\iota}: k\Sigma_l \hookrightarrow P_k(l,\delta) \overset{\iota}{\hookrightarrow} P_k(r,\delta)$. Restriction to a (smaller) group algebra will also be via this embedding. 
\\
 
Xi showed that $P_k(r,\delta)$ is cellular by considering it as an iterated inflation of group algebras of symmetric groups. 

\begin{thm}[{\hspace*{-0.7ex}\cite[Theorem 4.1]{Xi}}]\label{thm: Xi}
The partition algebra $P_k(r,\delta)$ is cellular. More precisely, it is an iterated inflation of the form $\bigoplus\limits_{n=0}^r k\Sigma_n \otimes_k V_n \otimes_k V_n$, with respect to the involution $i$ turning a diagram upside down. \end{thm}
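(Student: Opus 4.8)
The plan is to exhibit $P:=P_k(r,\delta)$ explicitly as an iterated inflation of the cellular algebras $k\Sigma_0, k\Sigma_1,\dots, k\Sigma_r$ along the vector spaces $V_0,\dots,V_r$, and then invoke the general criterion of König and Xi that an iterated inflation of cellular algebras is cellular (see \cite{Xi}). Here each $k\Sigma_n$ is cellular with respect to the involution $\pi\mapsto\pi^{-1}$, with Murphy's standard basis, poset the partitions of $n$, and cell modules the (dual) Specht modules.

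First I would set up the ideal filtration by propagating number. For $0\le n\le r$, let $A^{(\le n)}$ be the span of the diagrams $d$ with $\#_p(d)\le n$. Since concatenation never increases the propagating number ($\#_p(xy)\le\min\{\#_p(x),\#_p(y)\}$), each $A^{(\le n)}$ is a two-sided ideal; since $i$ preserves $\#_p$, each is $i$-stable; and $0=A^{(\le -1)}\subset A^{(\le 0)}\subset\dots\subset A^{(\le r)}=P$. The $n$-th layer of the inflation is $A^{(\le n)}/A^{(\le n-1)}$. Next I would identify the layers. A diagram $d$ with $\#_p(d)=n$ is recovered from the triple $(\Pi(d),\top(d),\bottom(d))$, where $\top(d),\bottom(d)$ are read as partial diagrams with $n$ labelled parts, the labelled parts being the ends of the propagating lines numbered from the left; conversely every such triple arises from a unique diagram. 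This gives $A^{(\le n)}/A^{(\le n-1)}\cong k\Sigma_n\otimes_k V_n\otimes_k V_n$, hence $P\cong\bigoplus_{n=0}^r k\Sigma_n\otimes_k V_n\otimes_k V_n$ as a vector space via $d\mapsto\Pi(d)\otimes\top(d)\otimes\bottom(d)$. For the inflation form: given $v,w\in V_n$, glue the partial diagram $v$ on top of $w$ and follow the connections; if each of the $n$ labelled parts of $v$ is joined to exactly one labelled part of $w$ and vice versa, producing a permutation $\sigma\in\Sigma_n$ and $c$ interior closed components, put $\phi_n(v,w)=\delta^c\sigma$, and otherwise $\phi_n(v,w)=0$; extend bilinearly. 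One checks that $\phi_n$ does not depend on the chosen diagram representatives and that $\phi_n(w,v)$ is the image of $\phi_n(v,w)$ under the involution of $k\Sigma_n$, since gluing in the opposite order mirrors the picture, inverting $\sigma$ and fixing $c$.

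The technical heart is then the multiplication computation. For diagrams $d_1=(\pi_1,v_1,w_1)$ and $d_2=(\pi_2,v_2,w_2)$ with $\#_p(d_1)=\#_p(d_2)=n$, concatenation glues $\bottom(d_1)=w_1$ to $\top(d_2)=v_2$; a propagating part of $d_1d_2$ must thread through a propagating part of $d_1$, then the middle row, then a propagating part of $d_2$. Hence either the propagating parts of $d_1$ and $d_2$ match up bijectively across the middle, in which case $\#_p(d_1d_2)=n$ and $d_1d_2=\delta^c(\pi_1\sigma\pi_2,v_1,w_2)$ with $(\sigma,c)$ the data of $\phi_n(w_1,v_2)$; or some propagating parts merge or meet a non-propagating part, in which case $\#_p(d_1d_2)<n$, i.e.\ $d_1d_2\in A^{(\le n-1)}$. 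In both cases, modulo $A^{(\le n-1)}$,
\[
(\pi_1\otimes v_1\otimes w_1)(\pi_2\otimes v_2\otimes w_2)=\pi_1\,\phi_n(w_1,v_2)\,\pi_2\;\otimes\;v_1\;\otimes\;w_2,
\]
which is exactly the inflation multiplication of $k\Sigma_n$ along $V_n$ with form $\phi_n$.

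Together with the compatibility of $i$ with $\pi\mapsto\pi^{-1}$ on each layer (turning a diagram upside down sends $(\pi,v,w)$ to $(\pi^{-1},w,v)$), this shows that $P$ is an iterated inflation of cellular algebras, compatibly with $i$, and hence cellular, with poset $\{(n,\lambda):0\le n\le r,\ \lambda\vdash n\}$ (with $(n,\lambda)<(m,\mu)$ when $n<m$, or $n=m$ and $\lambda$ below $\mu$ in the order on $k\Sigma_n$), index sets $V_n\times\{\text{standard }\lambda\text{-tableaux}\}$, and basis elements $v\otimes C^\lambda_{\mathfrak s,\mathfrak t}\otimes w$. The main obstacle is the dichotomy in the multiplication computation: one must carefully track what happens to the propagating parts under concatenation, confirm that the only possibilities are that the propagating number strictly drops or that the propagating parts match up bijectively (giving an inflation-type product up to a power of $\delta$), and check that this combinatorial dichotomy, together with the form $\phi_n$, is insensitive to the non-uniqueness of diagram representatives.
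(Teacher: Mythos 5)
This theorem is not proved in the paper at all --- it is quoted verbatim from Xi \cite[Theorem 4.1]{Xi} --- and your argument is essentially a correct reconstruction of Xi's original proof: filtration of $P_k(r,\delta)$ by propagating number, identification of the layers with $k\Sigma_n\otimes_k V_n\otimes_k V_n$ via $d\mapsto \Pi(d)\otimes\top(d)\otimes\bottom(d)$, the gluing form $\phi_n$ with its $\delta$-powers, the dichotomy ``propagating number drops or the propagating parts match bijectively'', and compatibility of the upside-down involution with $\pi\mapsto\pi^{-1}$, followed by the K\"onig--Xi iterated-inflation criterion. So your proposal takes the same route as the cited source and is sound in outline; the only point to treat with care is the one you already flag, namely verifying the multiplication rule modulo the lower ideal (including that closed middle components are exactly those meeting no labelled part, so the $\delta$-count is as claimed).
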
 

The cell modules are called \emph{standard modules} in \cite{Xi}.
In \cite{HHKP}, the partition algebra (with $\delta \neq 0$) is one of the main examples for cellularly stratified algebras. For the cellularly stratified structure, we need the existence of idempotents $e_n=1_{\Sigma_n}\otimes u_n \otimes v_n$ such that $e_ne_m=e_m=e_me_n$ for $m\leq n$.\\

From now on, let $\delta \neq 0$ and set 
$$e_0 := \frac{1}{\delta} \cdot \begin{minipage}[c]{4cm} \xysmall{ \bullet^1 \tra[r]  & \bullet \tra[r] & ...  \tra[r]  & \bullet \tra[r] & \bullet^r \\  \bullet_{1'} \tra[r] & \bullet \tra[r] & ...  \tra[r]  & \bullet \tra[r] & \bullet_{r'} } \end{minipage} \text{,} \quad e_n := \begin{minipage}[c]{4cm} \xysmall{\bullet^1 \tra[d] & ... & \bullet \tra[d] & \bullet^n \tra[r] \tra[d] &  ...  \tra[r]  & \bullet^r \\ \bullet_{1'} & ... & \bullet & \bullet_{n'} \tra[r] &  ...  \tra[r] & \bullet_{r'}} \end{minipage} \text{ for } n\geq 1.$$

\begin{thm}[{\hspace*{-0.7ex}\cite[Proposition 2.6]{HHKP}}]\label{thm: HHKP}
The partition algebra $P_k(r,\delta)$ is cellularly stratified with stratification data $(k,V_0,k,V_1,k\Sigma_2,V_2,...,k\Sigma_r,V_r)$ and idempotents $e_n$ for all parameters $\delta \in k\setminus \{0\}$. 
\end{thm}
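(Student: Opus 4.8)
The plan is to leverage Xi's theorem rather than reprove cellularity. By that theorem $P_k(r,\delta)$ is a cellular algebra realised as an iterated inflation $\bigoplus_{n=0}^{r}k\Sigma_n\otimes_k V_n\otimes_k V_n$ of the group algebras $k\Sigma_n$ along the vector spaces $V_n$, with respect to the involution $i$; each layer algebra $k\Sigma_n$ is cellular, $V_n\neq0$, and $i$ fixes each layer setwise. Thus the ``iterated inflation'' part of the definition of a cellularly stratified algebra, with the stated stratification data $(k,V_0,k,V_1,k\Sigma_2,V_2,\dots,k\Sigma_r,V_r)$, is already provided. What remains is to produce the idempotents and verify their two defining properties: writing $e_n=1_{k\Sigma_n}\otimes u_n\otimes v_n$ for suitable $u_n,v_n\in V_n$ (and $e_0=\tfrac1\delta\,(u_0\otimes v_0)$), one must check $e_n^2=e_n$ and $e_ne_m=e_m=e_me_n$ for all $m\le n$.

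First I would fix $u_n=v_n\in V_n$ to be the partial diagram whose labelled parts, counted from left to right, are the $n-1$ singletons $\{1\},\dots,\{n-1\}$ together with one ``fat'' labelled part $\{n,n+1,\dots,r\}$ (for $n=0$ the single labelled part $\{1,\dots,r\}$; for $n=r$ the $r$ labelled singletons), and the middle element the identity of $\Sigma_n$. Unravelling the inflation notation, $1_{k\Sigma_n}\otimes u_n\otimes v_n$ is exactly the diagram displayed before the statement: the through-strands $i\to i'$ for $1\le i\le n-1$, plus the single propagating part joining $\{n,\dots,r\}$ in the top row to $\{n',\dots,r'\}$ in the bottom row; taking $u_n=v_n$ also makes $i(e_n)=e_n$. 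Idempotency is a direct concatenation check: stacking $e_n$ on $e_n$, the through-strands compose to through-strands and the two copies of the fat part glue along the identified middle row into a single connected part meeting both the top and bottom rows, so no closed component is removed; hence $e_n^2=e_n$ for $n\ge1$. Equivalently, the cell-datum bilinear form $\phi_n\colon V_n\times V_n\to k\Sigma_n$ of the $n$-th layer satisfies $\phi_n(u_n,v_n)=1_{k\Sigma_n}$. For $n=0$ there is no labelled part, so stacking $u_0$ on $v_0$ produces exactly one free closed component in the middle row, i.e. $\phi_0(u_0,v_0)=\delta$; since $\delta\neq0$ the normalisation $e_0=\tfrac1\delta\,(u_0\otimes v_0)$ makes sense and $e_0^2=\tfrac1{\delta^2}\cdot\delta\,(u_0\otimes v_0)=e_0$. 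This is the only place $\delta\neq0$ is needed.

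For the absorption relations, since $i$ is an anti-automorphism with $i(e_n)=e_n$ one has $e_ne_m=i(e_me_n)$, so it suffices to show $e_me_n=e_m$ for $m\le n$. This is again a diagram computation: placing $e_m$ on top of $e_n$, the $m-1$ through-strands of $e_m$ survive, while the fat part of $e_m$ (carrying the top dots $\{m,\dots,r\}$) is glued along the identified middle row to the through-strands $m,\dots,n-1$ of $e_n$ and to the fat part of $e_n$, producing a single propagating part joining $\{m,\dots,r\}$ on top to $\{m',\dots,r'\}$ on the bottom, and no closed component is removed; the outcome is exactly $e_m$. (For $m=0$ the same computation applies once one notes the merged component meets the bottom row, hence is not a free loop, so the prefactor $\tfrac1\delta$ of $e_0$ is not cancelled and the product is again $e_0$.) Thus $e_me_n=e_m$ and, by the involution, $e_ne_m=e_m$, for all $m\le n$, which together with Xi's iterated-inflation structure verifies every requirement; so $P_k(r,\delta)$ is cellularly stratified with the stated data whenever $\delta\neq0$.

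I do not expect a serious obstacle here: the argument is essentially bookkeeping of diagram concatenations, and the content is concentrated in (i) choosing $u_n,v_n$ so that the displayed $e_n$ genuinely lies in the $n$-th layer and has $\phi_n(u_n,v_n)$ invertible in $k\Sigma_n$ --- the only real constraint being $\delta\neq0$, forced purely by the $n=0$ layer where $\phi_0(u_0,v_0)=\delta$ --- and (ii) tracking which glued components become free closed loops (contributing powers of $\delta$) and which stay attached to a top or bottom row. It is also worth recording for later use that $Ae_mA$ is the span of all diagrams of propagating number $\le m$: products cannot raise the propagating number, $e_p=e_me_p\in Ae_mA$ for $p\le m$, and a short induction using the inflation product shows every such diagram lies in $Ae_pA$; hence $Ae_0A\subseteq Ae_1A\subseteq\cdots\subseteq Ae_rA=A$ is precisely Xi's cell chain, and in particular $Ae_{m-1}A\subseteq Ae_mA$.
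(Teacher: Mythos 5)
The paper does not actually prove this statement — it is imported verbatim from \cite[Proposition 2.6]{HHKP} — and your argument is essentially the verification given in that source: take Xi's iterated-inflation structure as the cellular input, realise $e_n=1_{k\Sigma_n}\otimes u_n\otimes v_n$ as the displayed diagram, and check $e_n^2=e_n$ and $e_me_n=e_m=e_ne_m$ for $m\le n$ by concatenation, with $\delta\neq 0$ needed only to normalise the bottom layer $e_0=\frac{1}{\delta}(u_0\otimes v_0)$; this is correct. One cosmetic slip: for $n=0$ the part $\{1,\dots,r\}$ of $u_0$ is \emph{unlabelled} (there are no propagating, hence no labelled, parts), which is what your later computation $\phi_0(u_0,v_0)=\delta$ in fact uses.
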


Intuitively, a cellular algebra is cellularly stratified if there is a chain of two-sided ideals ${0}= J_0 \subseteq J_1 \subseteq ... \subseteq J_r =A$ such that each subquotient $J_l/J_{l-1}$ is a non-unital algebra of the form $B_l \otimes V_l \otimes V_l$ for some smaller cellular algebra $B_l$ and vector space $V_l$. 

As a consequence of the cellularly stratified structure, we have that $k\Sigma_r$ is also a quotient of $P_k(r,\delta)$ by the ideal generated by all diagrams $d$ with $\#_p(d) \leq r-1$, i.e. $k\Sigma_r \simeq A/(Ae_{r-1}A)$, where $A=P_k(r,\delta)$ for any $\delta \neq 0$. \\

From now on, let $A:=P_k(r,\delta)$ and $\delta \neq 0$. By abuse of notation, we write $e_n(A/J_{n-1})$ for the $(e_nAe_n,A)$-bimodule $e_nA/e_nJ_{n-1}$, where $J_{n-1} = Ae_{n-1}A$ is the two-sided ideal generated by all diagrams $d$ with $\#_p(d) \leq n-1$. 
We regard $e_nA$ as left $k\Sigma_n$-module via the embedding\footnote{$e_nAe_n$ is the image of the embedding $\iota: P_k(n,\delta) \hookrightarrow P_k(r,\delta)$, hence $e_nAe_n \simeq P_k(n,\delta)$.} $k\Sigma_n \hookrightarrow e_nAe_n \simeq P_k(n,\delta)$. 

The group algebra $k\Sigma_n$ is cellular; we choose as cell modules the dual Specht modules $S_\lambda$. It follows from \cite[Lemma 3.4 and Proposition 4.2]{HHKP} that the cell modules of $A$ are of the form $(A/J_{n-1})e_n \tensorover{e_nAe_n} S_\nu$ for $0 \leq n \leq r$ and $\nu$ a partition of $n$, where $J_{-1}:= 0$. This coincides with the more combinatorial definition in \cite{Xi}. 
Let 
$$\mathcal{F}_n(S)=\left\lbrace M \in k\Sigma_n-\mod \text{ } \middle| \begin{aligned} M=M_s \supset M_{s-1} \supset ... \supset M_1 \supset M_0=0, \\ M_i/M_{i-1} \simeq S_{\lambda_i} \text{ for some partition } \lambda_i \text{ of } n \end{aligned} \right\rbrace$$ denote the category of $k\Sigma_n$-modules admitting a dual Specht filtration. 

\section{The bimodule $e_l(A/J_{n-1})e_n$}\label{sec: restriction cell modules}

This section is dedicated to the proof of Theorem \ref{TMR}, which gives a structural analysis of the $(k\Sigma_l,k\Sigma_n)$-bimodule $e_l(A/J_{n-1})e_n$. Note that the bimodule is zero if $n > l$, since in this case $e_l \in J_{n-1}$. Hence we always assume $n \leq l$.

Most of the notation for Theorem \ref{TMR} is defined in Subsection \ref{subsec: notation}. We prove Theorem \ref{TMR} in separate lemmas and propositions. 

\begin{mythm}\label{TMR}
Let $A= P_k(r,\delta)$ with $\delta \neq 0$ and let $0\leq n \leq l \leq r$. The following holds for the $(k\Sigma_l,k\Sigma_n)$-bimodule $e_l(A/J_{n-1})e_n$. 
\begin{enumerate}
\item\label{TMR decomposition} There is a bimodule decomposition $e_l(A/J_{n-1})e_n \simeq \bigoplus\limits_{v \in V_n^l/_\sim} U_v$, indexed by a set of equivalence classes of partial diagrams.
\item\label{TMR isomorphisms} Fix $v \in V_n^l/_\sim$ and let $l_1$ be the amount of labelled dots and $l_2$ the amount of unlabelled dots in $v$. Let $\prod_\alpha \subset \Sigma_{l_1}$ be the stabilizer of the labelled dots and let $\prod_\beta \subset \Sigma_{l_2}$ be the stabilizer of the unlabelled dots. Then $$\begin{aligned} U_v  \simeq & \, k\Sigma_l \tensorover{k(\prod_\alpha \times \prod_\beta)} k\Sigma_n  \\  \simeq & \, k\Sigma_l \tensorover{k\Sigma_{(l_1,l_2)}} \left((k\Sigma_{l_1} \tensorover{k\prod_\alpha} k\Sigma_n) \boxtimes (k\Sigma_{l_2} \tensorover{k\prod_\beta} k)\right) \end{aligned}$$ as $(k\Sigma_l,k\Sigma_n)$-bimodules.
\item\label{TMR labelled} $k\Sigma_{l_1} \tensorover{k\prod_\alpha} k\Sigma_n$ is isomorphic to $$k\Sigma_{l_1} 
\tensorover{k\Sigma_{(\alpha_1,2\alpha_2,...,s\alpha_s)}} \left( (k\Sigma_{\alpha_1} \tensorover{k\Sigma_{(1^{\alpha_1})}} k) \boxtimes ... \boxtimes (k\Sigma_{s\alpha_s} \tensorover{k\Sigma_{(s^{\alpha_s})}} k)\right) \tensorover{k\Sigma_\alpha} k\Sigma_n$$ as a $(k\Sigma_{l_1},k\Sigma_n)$-bimodule and to a direct sum of copies of $$k\Sigma_{l_1} \tensorover{k\Sigma_{(\alpha_1,2\alpha_2,...,s\alpha_s)}} \left((k\Sigma_{\alpha_1} \tensorover{k\Sigma_{(1^{\alpha_1})}} k) \boxtimes ... \boxtimes (k\Sigma_{s\alpha_s} \tensorover{k\Sigma_{(s^{\alpha_s})}} k)\right)$$ as a left $k\Sigma_{l_1}$-module. 
\item\label{TMR unlabelled} $k\Sigma_{l_2} \tensorover{k\prod_\beta} k$ is isomorphic to $$k\Sigma_{l_2} \tensorover{k\Sigma_{(\beta_1,2\beta_2,...,t\beta_t)}} \left(k \boxtimes (k\Sigma_{2\beta_2} \tensorover{k(\Sigma_2\wr\Sigma_{\beta_2})} k) \boxtimes ...\boxtimes (k\Sigma_{t\beta_t} \tensorover{k(\Sigma_t\wr\Sigma_{\beta_t})} k)\right)$$ as a left $k\Sigma_{l_2}$-module.
\item\label{TMR Specht} The left $k\Sigma_l$-module $e_l(A/J_{n-1})e_n$ admits a dual Specht filtration if $\text{\textnormal{char}}k = 0$ or $\text{\textnormal{char}}k > \lfloor \frac{l-n}{3} \rfloor$.
\end{enumerate}
\end{mythm}

\subsection{Notation and decomposition of $\boldsymbol{e_l(A/J_{n-1})e_n}$}\label{subsec: notation}
If $\lambda = (\lambda_1,\lambda_2,...,\lambda_s)$ is a composition of $r$, we denote the Young subgroup $\Sigma_{\lambda_1} \times \Sigma_{\lambda_2} \times ... \times \Sigma_{\lambda_s}$ by $\Sigma_\lambda$ and if $\lambda_i = \lambda_{i+1} = ... = \lambda_{i+j}$, we write $\lambda = (\lambda_1,...,\lambda_{i-1},\lambda_i^j,\lambda_{i+j+1},...,\lambda_s)$. 

Let $0 \leq n \leq l \leq r$, set $J_{-1}:= 0$ and let $V_n^l$ be the subspace of $V_n$ generated by all partial diagrams with $n$ labelled parts, where the last $r-l+1$ dots lie in the same part. 

We adopt the convention that these last $r-l+1$ dots of an element in $V_n^l$ are counted as just one dot. 
Let $v,w \in V_n^l$. We say that $v$ is equivalent to $w$, $v \sim w$, if and only if there is a $\pi \in \Sigma_l \subset e_lAe_l$ such that $\pi v = w$, where $\pi v$ is defined as follows. Write the diagram $\pi$ on top of $v$ and identify $\bottom(\pi)$ with $v$. Then $\pi v$ is the top row of this diagram, where a part is labelled if and only if it contains at least one labelled dot. 
In terms of diagrams, this means that $v$ and $w$ are equivalent, if and only if, for each size, the number of labelled parts and the number of unlabelled parts of $v$ and $w$ coincide. Recall that the last $r-l+1$ dots count as one. 

\begin{ex} \label{Ex: action of Sigma on V}
Let $r=7, l=6, n=2, \pi = (56) \in \Sigma_6$ and 
$$v = \xysmall{\circ \arcu[rr] & \bullet \arcu[rr] & \circ & \bullet \arcu[rr] & \circ & \bullet \tra[r] & \bullet }$$ Then $$ \begin{aligned}
\pi v & = \top \left(
\begin{minipage}[c]{4cm}
\xysmall{\bullet \tra[d] & \bullet \tra[d] & \bullet \tra[d] &\bullet \tra[d] &\bullet \tra[dr] &\bullet \tra[dl] \tra[r] &\bullet  \\
\circ \arcu[rr] & \bullet \arcu[rr] & \circ & \bullet \arcu[rr] & \circ & \bullet \tra[r] & \bullet }
\end{minipage}
\right) 
 & = \xysmall{\circ \arcu[rr] & \bullet \arcu[rr] & \circ & \bullet \tra[r] & \bullet & \circ \tra[r] & \circ }
 \end{aligned}$$ 
\end{ex}

For $v \in V_n^l$, we define $d_v$ to be the diagram with $\top(d_v)=v$, $\bottom(d_v)=\bottom(e_n)$ and $\Pi(d_v)=1_{k\Sigma_n}$. 
Let $b \in e_l(A/J_{n-1})e_n$ be a diagram with $\top(b) \sim v$. By definition, there is a $\pi \in \Sigma_l$ such that $\top(b)=\pi v$. Then $b = \pi d_v \Pi(\pi d_v)^{-1}\Pi(b)$.  Let $U_v$ be the $(k\Sigma_l,k\Sigma_n)$-bimodule generated by $d_v$. 
The following example explains how to write any diagram in $U_v$ in the form $\tau d_v \eta$. 
\begin{ex} Let $r=7,l=6,n=3, v=\xysmall{\circ \arcu[rr] & \circ \arcu[rr] & \circ & \circ \arcu[rr] & \circ & \circ \tra[r] & \circ}$ and $$b=\begin{minipage}[c]{4cm}
\xysmall{\bullet \tra[r] \tra[drr] & \bullet & \bullet \tra[dll] & \bullet \tra[r] \tra[dll] & \bullet \tra[r] & \bullet \tra[r] & \bullet \\ 
\bullet & \bullet & \bullet \tra[r] & \bullet \tra[r] &\bullet \tra[r] &\bullet \tra[r] & \bullet}\end{minipage}$$
Then $\top(b)=(2354)v$ and $\Pi(b)=(132)$. In particular, $$b=(2354)d_v\Pi((2354)d_v)^{-1}\Pi(b).$$
\end{ex}

\begin{lem}[Theorem \ref{TMR}, part (\ref{TMR decomposition})]\label{lemma: decomposition eAe}
The $(k\Sigma_l,k\Sigma_n)$-bimodule $e_l(A/J_{n-1})e_n$ decomposes into a direct sum $\bigoplus\limits_{v \in V_n^l/_\sim} U_v$. 
\end{lem}

\begin{proof}
Any diagram in $e_l(A/J_{n-1})e_n$ with top row in the equivalence class of $v$ equals $\tau d_v \eta$ for some $\tau \in \Sigma_l, \eta \in \Sigma_n$ by the remark above. 
 For $w \in V_n^l$ with $w \nsim v$, we have $U_w \cap U_v = \{0\}$
because a diagram in the intersection would have top row equivalent to $v$ and to $w$ simultaneously. Therefore, every diagram in $e_l(A/J_{n-1})e_n$ lies in exactly one of the sets $U_v$ and every diagram of $U_v$ is a diagram in $e_l(A/J_{n-1})e_n$ by definition. 
\end{proof}

\begin{rk} Let $p(m)$ denote the number of integer partitions of the natural number $m$ and let $p(m,k)$ denote the number of integer partitions of $m$ into $k$ parts. Set $p(0)=1$. Let $v \in V_n^l$.  
Assume that $v$ has $m$ labelled dots, where $n \leq m \leq l$, arranged into $n$ parts. Then $l-m$ dots are unlabelled. There are $p(m,n)p(l-m)$ possibilities for such a partial diagram $v$, up to equivalence. Hence $|V_n^l/_\sim| = \sum\limits_{m=n}^l p(m,n)p(l-m)$.  
\end{rk}

For the remainder of this section, we fix a partial diagram $v \in V_n^l$ and set $d:=d_v$. Let $\alpha_i$ be the number of labelled parts of size $i$ and $\beta_i$ the number of unlabelled parts of size $i$ of $v$, where again the last $r-l+1$ dots count as one dot. Then $\sum\limits_i (\alpha_i \cdot i) + \sum\limits_i (\beta_i \cdot i) = l$ and $\sum\limits_i \alpha_i = n$. Without loss of generality, assume that the parts of $v$ are ordered as follows. The labelled parts are on the left hand side, the unlabelled parts on the right hand side. The parts are then ordered increasingly from left to right. 
Let $\mathcal{S}_i^j \subseteq \{1,...,l\}$ be the set of dots of $v$ belonging to the $j$\th labelled part of size $i$ and let $\mathcal{T}_i^j \subseteq \{1,...,l\}$ be the set of dots of $v$ belonging to the $j$\th unlabelled part of size $i$. Then $\prod_\alpha:= \prod\limits_{{i \geq 1,  \alpha_i \neq 0}}((\Sigma_{\mathcal{S}_i^1} \times ... \times \Sigma_{\mathcal{S}_i^{\alpha_i}}) \rtimes \Sigma_{\alpha_i}) \subset \Sigma_l$ is the stabilizer subgroup of $\Sigma_l$ which stabilizes exactly the labelled parts of $v$. Similarly, $\prod_\beta:= \prod\limits_{{i \geq 1, \beta_i \neq 0}}((\Sigma_{\mathcal{T}_i^1} \times ... \times \Sigma_{\mathcal{T}_i^{\beta_i}}) \rtimes \Sigma_{\beta_i}) \subset \Sigma_l$ is the stabilizer subgroup of $\Sigma_l$ which stabilizes exactly the unlabelled parts of $v$. In particular, $\prod_\beta$ stabilizes $d$, while $\prod_\alpha$ can rearrange the propagating lines of $d$. Note that $\prod_\alpha \simeq \prod\limits_{i \geq 1, \alpha_i \neq 0} (\Sigma_i \wr \Sigma_{\alpha_i})$
 and $\prod_\beta \simeq \prod\limits_{i \geq 1, \beta_i \neq 0} (\Sigma_i \wr \Sigma_{\beta_i})$, where $\wr$ denotes the wreath product.
 
\begin{ex}
Let $r=12, l=11, n=3$ and $v = \xymatrixcolsep{4pt}\xymatrix{\circ \tra[r] & \circ & \circ \tra[r] & \circ & \circ \tra[r] & \circ & \bullet & \bullet \tra[r] & \bullet & \bullet \tra[r] & \bullet \tra[r] & \bullet}.$ Then $\alpha=(0,3), \beta=(1,2)$ and \begin{center}$\prod_\alpha = (\Sigma_{\{1,2\}} \times \Sigma_{\{3,4\}} \times \Sigma_{\{5,6\}}) \rtimes \Sigma_3 \simeq \Sigma_2 \wr \Sigma_3$\end{center} and \begin{center}$\prod_\beta = \Sigma_{\{7\}} \times ((\Sigma_{\{8,9\}} \times \Sigma_{\{10,11\}}) \rtimes \Sigma_2) \simeq \Sigma_1 \times (\Sigma_2 \wr \Sigma_2).$\end{center}
 \end{ex}

\subsection{The summands $\boldsymbol{U_v}$}\label{subsec: U as tensor product}
Consider the $(k\Sigma_l,k\Sigma_n)$-bimodule $k\Sigma_l \tensorover{k\prod_\alpha \times k\prod_\beta} k\Sigma_n$, where $\prod_\beta$ acts trivially on $k\Sigma_n$ and the action of $\prod_\alpha$ on $k\Sigma_n$ is given by $\zeta \cdot \eta := \Pi(\zeta d)\eta$ for $\zeta \in \prod_\alpha, \eta \in \Sigma_n$, i.e. $\prod_\alpha$ acts on $\Sigma_n$ via the canonical epimorphism $\prod_\alpha \twoheadrightarrow \Sigma_{\alpha_1} \times ... \times \Sigma_{\alpha_s}$. We have $\top(\zeta d) = \top(d)$, so $\zeta d = d \Pi(\zeta d)$ for $\zeta \in \prod_\alpha$.    

\begin{lem}\label{lemma: psi}
The map 	
$$ \begin{aligned}
\psi: \, k\Sigma_l & \tensorover{k\prod_\alpha \times k\prod_\beta} k\Sigma_n  & \longrightarrow & \text{ } U_v \\
& \quad \tau \otimes \eta & \longmapsto & \text{ } \tau d \eta \end{aligned}$$
is an isomorphism of $(k\Sigma_l,k\Sigma_n)$-bimodules. 
\end{lem}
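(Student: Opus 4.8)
The plan is to verify that $\psi$ is well-defined, that it is a homomorphism of $(k\Sigma_l, k\Sigma_n)$-bimodules, and that it is bijective; the only genuinely subtle point is well-definedness, and everything else is then bookkeeping with the generating set.

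First I would check that $\psi$ is well-defined, i.e. that the assignment $\tau\otimes\eta\mapsto \tau d\eta$ respects the relations in the balanced tensor product over $k\prod_\alpha\times k\prod_\beta$. Since the tensor product is spanned by elementary tensors, it suffices to show $\tau\zeta\beta d\eta = \tau d(\zeta\cdot\beta\cdot\eta)$ for all $\tau\in\Sigma_l$, $\zeta\in\prod_\alpha$, $\beta\in\prod_\beta$, $\eta\in\Sigma_n$, where the right-hand action is the one defined just before the lemma. By the paragraph preceding the lemma, $\beta$ stabilizes $d$ (so $\beta d = d$) and $\beta$ acts trivially on $k\Sigma_n$, while $\zeta d = d\,\Pi(\zeta d)$ because $\top(\zeta d)=\top(d)$. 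Hence $\tau\zeta\beta d\eta = \tau\zeta d\eta = \tau d\,\Pi(\zeta d)\eta = \tau d(\zeta\cdot\eta)$, which matches $\psi(\tau\otimes(\zeta\cdot\beta\cdot\eta))$ since $\beta$ acts as identity. One must also confirm $\tau d\eta$ genuinely lies in $U_v$: by definition $U_v$ is the $(k\Sigma_l,k\Sigma_n)$-subbimodule generated by $d=d_v$, so $\tau d\eta\in U_v$ is immediate.

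Next, that $\psi$ is a bimodule map is clear from the formula: for $\sigma\in\Sigma_l$ one has $\psi(\sigma\tau\otimes\eta)=\sigma\tau d\eta=\sigma\,\psi(\tau\otimes\eta)$, and for $\theta\in\Sigma_n$ one has $\psi(\tau\otimes\eta\theta)=\tau d\eta\theta=\psi(\tau\otimes\eta)\theta$, and $\psi$ is $k$-linear by construction. Surjectivity follows because $U_v$ is generated as a bimodule by $d$, and every element of the image is a $k$-linear combination of elements $\tau d\eta$; more concretely, by statement (3.1) every diagram in $U_v$ (equivalently, every diagram of $e_l(A/J_{n-1})e_n$ with top row equivalent to $v$) is of the form $\tau d_v\eta=\psi(\tau\otimes\eta)$.

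The main obstacle — and the crux of the argument — is injectivity, i.e. a dimension count showing the source has dimension at most that of $U_v$. The dimension of $U_v$ equals the number of diagrams in $e_l(A/J_{n-1})e_n$ with top row in the $\sim$-class of $v$; by (3.1) such a diagram is determined by the pair (top row, propagating permutation), and one must count how many pairs $(\tau,\eta)$ give the same diagram $\tau d\eta$. The key combinatorial fact is that $\tau d\eta=\tau' d\eta'$ forces $\tau^{-1}\tau'\in\prod_\alpha\times\prod_\beta$ with the induced correction on $\eta$; concretely, $\tau d=\tau' d$ iff $\tau$ and $\tau'$ produce the same top row (so $\tau^{-1}\tau'$ stabilizes the labelled/unlabelled part structure of $v$, i.e. lies in $\prod_\alpha\times\prod_\beta$), and then the propagating lines are pinned down up to the action of $\prod_\alpha$ on $k\Sigma_n$ and the triviality of $\prod_\beta$. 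Thus the fibres of $(\tau,\eta)\mapsto\tau d\eta$ are exactly the orbits of $\prod_\alpha\times\prod_\beta$ defining the balanced tensor product, so $\dim\bigl(k\Sigma_l\otimes_{k\prod_\alpha\times k\prod_\beta}k\Sigma_n\bigr)=|\Sigma_l|\cdot|\Sigma_n|/(|\prod_\alpha|\cdot|\prod_\beta|)$ equals $\dim U_v$, and the surjective map $\psi$ between spaces of equal finite dimension is an isomorphism.
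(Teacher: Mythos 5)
Your proposal is correct: the well-definedness and bimodule-homomorphism checks are exactly the paper's, but you handle bijectivity differently. The paper writes down an explicit inverse $\tilde{\psi}\colon U_v \to k\Sigma_l \otimes_{k\prod_\alpha\times k\prod_\beta} k\Sigma_n$, $b \mapsto \tau \otimes \Pi(\tau d)^{-1}\Pi(b)$ for $\top(b)=\tau v$, and verifies that this is independent of the choice of $\tau$ and a two-sided inverse; note that the computation showing $\tilde{\psi}$ is well-defined (if $\tau_1 v=\tau_2 v$ then $\tau_1=\tau_2 xy$ and the correction $\Pi(xd)$ cancels) is precisely your ``fibres are the orbits'' observation in disguise, so the essential content is the same. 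What your counting route additionally needs to be airtight is: (i) the diagrams lying in $U_v$ form a linearly independent set (this is part of the paper's setup, where $e_l(A/J_{n-1})e_n$ has a diagram basis split among the $U_v$), (ii) the stabilizer of $v$ in $\Sigma_l$ is exactly $\prod_\alpha\times\prod_\beta$ (stated in the paper, in the proof of Lemma \ref{lemma: phi}), and (iii) $\dim\bigl(k\Sigma_l \otimes_{k\prod_\alpha\times k\prod_\beta} k\Sigma_n\bigr)=[\Sigma_l:\prod_\alpha\times\prod_\beta]\cdot n!$, which follows from freeness of $k\Sigma_l$ as a right module over the subgroup algebra (independently of how $\prod_\alpha$ acts on $k\Sigma_n$). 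With these in place your argument closes: the explicit-inverse proof buys a formula that requires no finiteness or basis-counting and is reused implicitly later, while your proof is shorter once the combinatorial fibre description is accepted. One small imprecision: the clause ``$\tau d=\tau' d$ iff $\tau$ and $\tau'$ produce the same top row'' is not literally true --- equality of these diagrams also forces $\Pi(\tau d)=\Pi(\tau' d)$, i.e. $\tau^{-1}\tau'\in\Sigma_\gamma\times\prod_\beta$ rather than just $\prod_\alpha\times\prod_\beta$ --- but the statement you actually use, that $\tau d\eta=\tau' d\eta'$ exactly when $(\tau',\eta')$ lies in the $(\prod_\alpha\times\prod_\beta)$-orbit of $(\tau,\eta)$, is correct, so this is a phrasing slip rather than a gap.
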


\begin{proof}
Let $x \in \prod_\alpha$ and $y \in \prod_\beta$. Then $yd=d$ and $xd=d\Pi(xd)$. Consider the map $\Psi:k\Sigma_l \times k\Sigma_n \to U_v$ given by $ \Psi(\tau,\eta)=\tau d \eta$. Then $\Psi(\tau xy,\eta)=\tau xyd\eta = \tau d\Pi(xd)\eta = \Psi(\tau,\Pi(xd)\eta) = \Psi(\tau,xy\cdot \eta)$. This shows that $\psi$ is well-defined. 

Let $\tau,\tau' \in \Sigma_l$ and $\eta,\eta' \in \Sigma_n$. Then $\psi(\tau'\tau,\eta\eta')=\tau'\tau d \eta\eta' = \tau' \psi(\tau,\eta) \eta'$, so $\psi$ is a bimodule-homomorphism. 

The inverse map is given by
$$\begin{aligned}
\tilde{\psi}: \, & U_v & \longrightarrow & \text{ } k\Sigma_l \tensorover{k\prod_\alpha \times k\prod_\beta} k\Sigma_n &\\
& b & \longmapsto & \text{ } \tau \otimes \Pi(\tau d)^{-1} \Pi(b) & \text{ if } \top(b)=\tau v. \end{aligned}$$

We show that $\tilde{\psi}$ is well-defined, i.e. we show that it is independent of the choice of $\tau$. If $\top(b)=\tau_1 v = \tau_2 v$, there are $x \in \prod_\alpha$, $y \in \prod_\beta$ such that $\tau_1 = \tau_2 xy$. Then $$ \begin{aligned}
\tau_1 \otimes \Pi(\tau_1 d)^{-1}\Pi(b) & = \tau_2 xy \otimes \Pi(\tau_2 xyd)^{-1}\Pi(b)  \\
& \overset{(\dagger)}{=} \tau_2 \otimes \Pi(xd)\Pi(\tau_2 d\Pi(xd))^{-1}\Pi(b)  \\
& \overset{(\ast)}{=} \tau_2 \otimes \Pi(xd)(\Pi(\tau_2 d)\Pi(xd))^{-1}\Pi(b) \\
&= \tau_2 \otimes \Pi(xd)\Pi(xd)^{-1}\Pi(\tau_2 d)^{-1}\Pi(b) \\
&= \tau_2 \otimes \Pi(\tau_2 d)^{-1}\Pi(b). \end{aligned} $$

The equation $(\dagger)$ holds by definition of the action of $\prod_\alpha \times \prod_\beta$ on $k\Sigma_n$ and on $d$. The equation $(\ast)$ holds since the permutation $\Pi(a\eta)$ induced by the propagating lines of the product of a diagram $a \in Ae_n$ and a permutation diagram $\eta \in \Sigma_n$ is read from top to bottom, so it does not matter whether we consider $\eta$ as (bottom) part of the diagram $a\eta$ or as an independent diagram, hence $\Pi(a\eta) = \Pi(a)\eta$. 

Let $b \in U_v$ be a diagram with $\top(b)=\tau v$ for some $\tau \in \Sigma_l$ and let $\eta \in \Sigma_n$. Then $\psi\tilde{\psi}(b)=\psi(\tau \otimes \Pi(\tau d)^{-1}\Pi(b))=\tau d \Pi(\tau d)^{-1}\Pi(b)=b$ and $\tilde{\psi}\psi(\tau \otimes \eta) = \tilde{\psi}(\tau d \eta) = \tau \otimes \Pi(\tau d)^{-1}\Pi(\tau d \eta) = \tau \otimes \eta$, so $\tilde{\psi}$ is the inverse of $\psi$. 
\end{proof}

Set $l_1:=\sum\limits_i\alpha_i\cdot i$ and $l_2:=\sum\limits_i \beta_i\cdot i$, so $l=l_1+l_2$, $\prod_\alpha \subset \Sigma_{l_1} $ and define $\Sigma_{l_2}:=\Sigma_{\{l_1+1,...,l\}} = \Sigma_{\bigcup\limits_{i,j} \mathcal{T}_i^j} \supset \prod_\beta$.   Fix coset representatives $\omega_1,...,\omega_t$ of $k\Sigma_l/k\Sigma_{(l_1,l_2)}$\label{cosets omega}. Denote by $X \boxtimes Y \in k\Sigma_{(l_1,l_2)}-\mod$ the exterior tensor product of $X \in k\Sigma_{l_1}-\mod$ and $Y \in k\Sigma_{l_2}-\mod$ given by $$(\tau_1,\tau_2)\cdot(x \boxtimes y)= \tau_1 x \boxtimes \tau_2 y$$ for $\tau_1 \in \Sigma_{l_1}, \tau_2 \in \Sigma_{l_2},x \in X, y \in Y$.
 
Consider the $(k\Sigma_l,k\Sigma_n)$-bimodule $k\Sigma_l \tensorover{k\Sigma_{(l_1,l_2)}} \left((k\Sigma_{l_1} \tensorover{k\prod_\alpha} k\Sigma_n) \boxtimes (k\Sigma_{l_2} \tensorover{k\prod_\beta} k)\right)$ with right $k\Sigma_n$-module structure given by 
$$(\omega \otimes ((\tau_1 \otimes \eta) \boxtimes (\tau_2 \otimes 1)))\cdot \eta':=\omega \otimes ((\tau_1 \otimes \eta\eta') \boxtimes (\tau_2 \otimes 1))$$ for $\omega \otimes ((\tau_1 \otimes \eta) \boxtimes (\tau_2 \otimes 1)) \in k\Sigma_l \tensorover{k\Sigma_{(l_1,l_2)}} \left((k\Sigma_{l_1} \tensorover{k\prod_\alpha} k\Sigma_n) \boxtimes (k\Sigma_{l_2} \tensorover{k\prod_\beta} k)\right)$ and $\eta' \in \Sigma_n$, i.e. $\Sigma_n$ acts regularly on $k\Sigma_n$ and trivially on $k$. 

\begin{lem}\label{lemma: theta}
$k\Sigma_l \tensorover{k\prod_\alpha \times k\prod_\beta} k\Sigma_n$ and $k\Sigma_l \tensorover{k\Sigma_{(l_1,l_2)}} \left((k\Sigma_{l_1} \tensorover{k\prod_\alpha} k\Sigma_n) \boxtimes (k\Sigma_{l_2} \tensorover{k\prod_\beta} k)\right)$  are isomorphic as $(k\Sigma_l,k\Sigma_n)$-bimodules. 
\end{lem}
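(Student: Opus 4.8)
The plan is to prove Lemma \ref{lemma: theta} by ``induction in stages'' through the intermediate subgroup $\Sigma_{(l_1,l_2)}$, combined with the standard compatibility of $-\otimes-$ with exterior tensor products over a tensor product of algebras; and, since the paper prefers explicit maps (as in Lemma \ref{lemma: psi}), to also record the resulting composite isomorphism by hand. First I would fix the group-theoretic picture: the dots of $v$ carrying labelled parts and those carrying unlabelled parts are disjoint, so $\prod_\alpha\subseteq\Sigma_{l_1}$ and $\prod_\beta\subseteq\Sigma_{l_2}$ lie in the two commuting factors of $\Sigma_{(l_1,l_2)}=\Sigma_{l_1}\times\Sigma_{l_2}\subseteq\Sigma_l$, whence $k\prod_\alpha\times k\prod_\beta = k\prod_\alpha\otimes_k k\prod_\beta$ is a subalgebra of $k\Sigma_{(l_1,l_2)}=k\Sigma_{l_1}\otimes_k k\Sigma_{l_2}$. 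I would also recall (from the remark preceding Lemma \ref{lemma: psi}, using $\top(\zeta d)=\top(d)$, hence $\zeta d = d\,\Pi(\zeta d)$) that $\zeta\mapsto\Pi(\zeta d)$ is a group homomorphism $\prod_\alpha\to\Sigma_n$; consequently the right $k\prod_\alpha\times k\prod_\beta$-module $k\Sigma_n$ (with $\prod_\alpha$ acting via this homomorphism and $\prod_\beta$ trivially) is canonically $k\Sigma_n\otimes_k k$ with $\prod_\alpha$ acting on the first factor and $\prod_\beta$ on the second.

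Next I would apply associativity of the balanced tensor product to factor through $\Sigma_{(l_1,l_2)}$:
$$
k\Sigma_l \tensorover{k\prod_\alpha \times k\prod_\beta} k\Sigma_n
\;\cong\;
k\Sigma_l \tensorover{k\Sigma_{(l_1,l_2)}} \Bigl( k\Sigma_{(l_1,l_2)} \tensorover{k\prod_\alpha \times k\prod_\beta} k\Sigma_n \Bigr),
$$
an isomorphism of $(k\Sigma_l,k\Sigma_n)$-bimodules in which the right $k\Sigma_n$-action on both sides is the one carried by the rightmost $k\Sigma_n$. It then suffices to identify the inner factor. Writing $k\Sigma_{(l_1,l_2)}=k\Sigma_{l_1}\otimes_k k\Sigma_{l_2}$, $k\prod_\alpha\times k\prod_\beta = k\prod_\alpha\otimes_k k\prod_\beta$ and $k\Sigma_n\cong k\Sigma_n\otimes_k k$ as above, the standard identity $(A\otimes_k B)\otimes_{A'\otimes_k B'}(M\otimes_k N)\cong (A\otimes_{A'}M)\otimes_k(B\otimes_{B'}N)$ (for $A'\subseteq A$, $B'\subseteq B$, $M$ a left $A'$-module, $N$ a left $B'$-module) gives
$$
k\Sigma_{(l_1,l_2)} \tensorover{k\prod_\alpha \times k\prod_\beta} k\Sigma_n
\;\cong\;
\Bigl(k\Sigma_{l_1} \tensorover{k\prod_\alpha} k\Sigma_n\Bigr) \otimes_k \Bigl(k\Sigma_{l_2} \tensorover{k\prod_\beta} k\Bigr),
$$
and the $k\Sigma_{l_1}\times k\Sigma_{l_2}$-module on the right is exactly the exterior product $\bigl(k\Sigma_{l_1}\tensorover{k\prod_\alpha}k\Sigma_n\bigr)\boxtimes\bigl(k\Sigma_{l_2}\tensorover{k\prod_\beta}k\bigr)$. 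Substituting back yields the claimed bimodule isomorphism, and since the element $\eta'\in\Sigma_n$ acts throughout the chain only by right multiplication on the $k\Sigma_n$-entry, this recovers precisely the prescribed right action $(\omega\otimes((\tau_1\otimes\eta)\boxtimes(\tau_2\otimes1)))\cdot\eta' = \omega\otimes((\tau_1\otimes\eta\eta')\boxtimes(\tau_2\otimes1))$.

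Alternatively, and probably cleanest to write out, I would give the composite map directly: fixing coset representatives $\omega_1,\dots,\omega_t$ of $\Sigma_l/\Sigma_{(l_1,l_2)}$ and writing each $\tau\in\Sigma_l$ uniquely as $\tau=\omega_i(\tau_1,\tau_2)$ with $(\tau_1,\tau_2)\in\Sigma_{l_1}\times\Sigma_{l_2}$, set $\theta(\tau\otimes\eta)=\omega_i\otimes\bigl((\tau_1\otimes\eta)\boxtimes(\tau_2\otimes1)\bigr)$, with candidate inverse $\omega_i\otimes((\tau_1\otimes\eta)\boxtimes(\tau_2\otimes1))\mapsto\omega_i(\tau_1,\tau_2)\otimes\eta$. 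The only delicate point is well-definedness: for $x\in\prod_\alpha$, $y\in\prod_\beta$ one has $\tau xy=\omega_i(\tau_1 x,\tau_2 y)$ because $x$ and $y$ move disjoint dot sets, so $\theta(\tau xy\otimes\eta)=\omega_i\otimes((\tau_1 x\otimes\eta)\boxtimes(\tau_2 y\otimes1))=\omega_i\otimes((\tau_1\otimes\Pi(xd)\eta)\boxtimes(\tau_2\otimes1))=\theta(\tau\otimes\Pi(xd)\eta)$, using the defining relations of the two inner tensor products and the triviality of the $\prod_\beta$-action; the reverse relation is symmetric. Compatibility with the left $k\Sigma_l$-action reduces to the bookkeeping $\sigma\omega_i=\omega_j(g_1,g_2)\Rightarrow\sigma\tau=\omega_j(g_1\tau_1,g_2\tau_2)$, and compatibility with the right $k\Sigma_n$-action is immediate. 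I expect this coset bookkeeping to be the only mildly fiddly part; there is no conceptual obstacle, as the lemma is an instance of transitivity of tensor products together with the behaviour of $\otimes_k$ of algebras under base change.
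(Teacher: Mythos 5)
Your proposal is correct, and your ``alternatively'' paragraph is in substance exactly the paper's proof: the same choice of coset representatives $\omega_1,\dots,\omega_t$ of $\Sigma_l/\Sigma_{(l_1,l_2)}$, the same formula $\theta(\tau\otimes\eta)=\omega_i\otimes((\tau_1\otimes\eta)\boxtimes(\tau_2\otimes1))$ for $\tau=\omega_i\tau_1\tau_2$, the same well-definedness check resting on the fact that $x\in\prod_\alpha\subseteq\Sigma_{l_1}$ commutes with $\tau_2\in\Sigma_{l_2}$ (so $\tau xy=\omega_i(\tau_1x)(\tau_2y)$ and the relation $\tau_1x\otimes\eta=\tau_1\otimes\Pi(xd)\eta$ can be used), the same bookkeeping $\tau'\omega_i=\omega_j\tau_1'\tau_2'$ for the left action, and the same explicit inverse. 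Your first, structural route is genuinely different from what the paper writes: transitivity of induction through $k\Sigma_{(l_1,l_2)}$ followed by the identity $(A\otimes_k B)\otimes_{A'\otimes_k B'}(M\otimes_k N)\cong(A\otimes_{A'}M)\otimes_k(B\otimes_{B'}N)$. It is valid here, and it buys you automatic well-definedness and a conceptual explanation of why the statement holds; the one point it genuinely requires (and which you correctly flag) is that $\zeta\mapsto\Pi(\zeta d)$ is a group homomorphism on $\prod_\alpha$, so that the middle module really is the exterior product $k\Sigma_n\boxtimes k$ over $k\prod_\alpha\otimes_k k\prod_\beta$, and that the right $\Sigma_n$-action rides along on the rightmost factor throughout -- which you also track. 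The paper's hands-on map has the advantage of producing the explicit formula that is reused later (e.g.\ in Lemma \ref{lemma: tensor product of foulkes modules} and in the proof of the main theorem), but either write-up would be acceptable.
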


\begin{proof}
Define $$\begin{aligned}
\Theta: \, & k\Sigma_l \times  k\Sigma_n  &\longrightarrow &\text{ } k\Sigma_l \tensorover{k\Sigma_{(l_1,l_2)}} \left((k\Sigma_{l_1} \tensorover{k\prod_\alpha} k\Sigma_n) \boxtimes (k\Sigma_{l_2} \tensorover{k\prod_\beta} k)\right) &\\
&(\tau,\eta) & \longmapsto &\text{ } \omega_i \otimes ((\tau_1 \otimes \eta) \boxtimes (\tau_2 \otimes 1)) & \text{if } \tau=\omega_i\tau_1\tau_2 \end{aligned}$$
where $\tau_1 \in \Sigma_{l_1}$ and $\tau_2 \in \Sigma_{l_2}$. Let $x \in \prod_\alpha$, $y \in \prod_\beta$. 
Then $x \in \Sigma_{l_1}\times \{0\} \subset \Sigma_l$, so $\tau_2x=x\tau_2$ for $\tau_2 \in \Sigma_{l_2}$. Thus, 
$\Theta(\tau xy,\eta) 
= \Theta(\omega_i\tau_1\tau_2xy,\eta) 
= \Theta(\omega_i\tau_1x\tau_2y,\eta) 
= \omega_i \otimes ((\tau_1x \otimes \eta) \boxtimes (\tau_2y \otimes 1)) 
= \omega_i \otimes ((\tau_1 \otimes \Pi(xd)\eta) \boxtimes (\tau_2 \otimes 1)) 
= \Theta(\tau, \Pi(xd)\eta) 
= \Theta(\tau, xy\cdot\eta)$. 
Hence, the map 
$$\begin{aligned} 
\theta: \, & k\Sigma_l \tensorover{k\prod_\alpha \times k\prod_\beta} k\Sigma_n  & \longrightarrow & \text{ }  k\Sigma_l \tensorover{k\Sigma_{(l_1,l_2)}} \left((k\Sigma_{l_1} \tensorover{k\prod_\alpha} k\Sigma_n) \boxtimes (k\Sigma_{l_2} \tensorover{k\prod_\beta} k)\right)&\\
&\tau \otimes \eta & \longmapsto &\text{ } \omega_i \otimes ((\tau_1 \otimes \eta) \boxtimes (\tau_2 \otimes 1))  & \hspace*{-0.55cm}\text{if } \tau = \omega_i\tau_1\tau_2 \end{aligned}$$ is well-defined.

Let $\tau'\in \Sigma_l$ and let $j \in \{1,...,t\}$, $\tau_1' \in \Sigma_{l_1}$, $\tau_2' \in \Sigma_{l_2}$ such that $\tau'\omega_i=\omega_j\tau_1'\tau_2'$. Let $\eta'\in \Sigma_n$. Then 
$$\begin{aligned}
\theta(\tau'\tau\otimes\eta\eta')&= \theta(\omega_j\tau_1'\tau_2'\tau_1\tau_2 \otimes \eta\eta') &  \\
& =\theta(\omega_j\tau_1'\tau_1\tau_2'\tau_2 \otimes \eta\eta') & \text{since } \tau_1 \in \Sigma_{l_1} \text{ and } \tau_2' \in \Sigma_{l_2} \text{ commute}   \\
&=\omega_j \otimes ((\tau_1'\tau_1 \otimes \eta\eta') \boxtimes (\tau_2'\tau_2 \otimes 1)) &\text{by definition of the map } \theta\\
&= \omega_j \otimes (\tau_1'\tau_2'((\tau_1\otimes \eta\eta') \boxtimes (\tau_2 \otimes 1))) & \text{by definition of the exterior tensor product}\\
&= \omega_j \tau_1'\tau_2' \otimes ((\tau_1 \otimes \eta\eta') \boxtimes (\tau_2 \otimes 1)) &\\
&= \tau'\omega_i \otimes ((\tau_1 \otimes \eta\eta') \boxtimes (\tau_2 \otimes 1)) & \\
&= \tau'(\omega_i \otimes ((\tau_1 \otimes \eta) \boxtimes (\tau_2 \otimes 1))) \eta' &\text{by the right } k\Sigma_n-\text{action}\\
&= \tau' \theta(\tau\otimes \eta)\eta' &\text{by definition of the map } \theta
\end{aligned}$$ 
 so $\theta$ is a homomorphism of $(k\Sigma_l,k\Sigma_n)$-bimodules. 

The inverse is given by $$\begin{aligned}
\theta^{-1}: \, & k\Sigma_l \tensorover{k\Sigma_{(l_1,l_2)}} \left((k\Sigma_{l_1} \tensorover{k\prod_\alpha} k\Sigma_n) \boxtimes (k\Sigma_{l_2} \tensorover{k\prod_\beta} k)\right) & \longrightarrow &\text{ } k\Sigma_l \tensorover{k\prod_\alpha \times k\prod_\beta} k\Sigma_n \\ 
& \tau \otimes ((\vartheta \otimes \eta) \boxtimes (\upsilon \otimes 1)) & \longmapsto &\text{ } (\tau\vartheta \upsilon \otimes \eta) \end{aligned}\vspace*{-0.5cm}$$ 
\end{proof}

This concludes the proof of part (\ref{TMR isomorphisms}) of Theorem \ref{TMR}.

\subsection{The Foulkes module}\label{subsec: Foulkes module}
We start with a deeper study of the module $k\Sigma_{l_2} \tensorover{k\prod_\beta} k$ in this section and continue with a study of the more complex module $k\Sigma_{l_1} \tensorover{k\prod_\alpha} k\Sigma_n$ in the next section. 
Let $t$ be the maximal size of an unlabelled part of $v$, keeping in mind that the last $r-l+1$ dots count as one, and
set $\Sigma_{\gamma_i}:= \Sigma_{\bigcup\limits_j \mathcal{T}_i^j}$ for $i=1,...,t$ and $\Sigma_{\gamma}:= \prod\limits_i \Sigma_{\gamma_i}$. Then $\Sigma_\gamma \simeq \Sigma_{(\beta_1,2\beta_2,...,t\beta_t)}$ and $\prod_\beta \subset \Sigma_{\gamma} \subset \Sigma_{l_2}$.

\begin{lem}[Theorem \ref{TMR}, part (\ref{TMR unlabelled})]\label{lemma: tensor product of foulkes modules}
There is an isomorphism $$ k\Sigma_{l_2} \tensorover{k\prod_\beta} k \simeq k\Sigma_{l_2} \tensorover{k\Sigma_{\gamma}} \left(k \boxtimes (k\Sigma_{2\beta_2} \tensorover{k(\Sigma_2\wr\Sigma_{\beta_2})} k) \boxtimes ...\boxtimes (k\Sigma_{t\beta_t} \tensorover{k(\Sigma_t\wr\Sigma_{\beta_t})} k)\right)$$ of left $k\Sigma_{l_2}$-modules.
\end{lem}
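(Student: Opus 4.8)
The plan is to recognise the left-hand side as an induced module and to carry out the induction in two stages, using that induction from a product of subgroups into a product of groups is compatible with the outer tensor product $\boxtimes$. First observe that $k\Sigma_{l_2}\tensorover{k\prod_\beta}k$ is exactly the induced module $\mathrm{Ind}_{\prod_\beta}^{\Sigma_{l_2}}(k)$ of the trivial $k\prod_\beta$-module. By the paragraph preceding the lemma we have the chain $\prod_\beta\subseteq\Sigma_{\tilde{\gamma}}\subseteq\Sigma_{l_2}$, so induction in stages (cf.\ \cite{CR}) gives an isomorphism of left $k\Sigma_{l_2}$-modules
$$k\Sigma_{l_2}\tensorover{k\prod_\beta}k\ \simeq\ k\Sigma_{l_2}\tensorover{k\Sigma_{\tilde{\gamma}}}\Bigl(k\Sigma_{\tilde{\gamma}}\tensorover{k\prod_\beta}k\Bigr),$$
implemented by $\tau\otimes x\mapsto\tau\otimes(1\otimes x)$ with inverse $\tau\otimes(\rho\otimes x)\mapsto\tau\rho\otimes x$; both are well defined precisely because $\prod_\beta\subseteq\Sigma_{\tilde{\gamma}}$.

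Next I would unravel the inner factor $k\Sigma_{\tilde{\gamma}}\tensorover{k\prod_\beta}k$. Recall that $\Sigma_{\tilde{\gamma}}=\prod_{i=1}^{t}\Sigma_{\tilde{\gamma_i}}$ and that $\prod_\beta=\prod_{i=1}^{t}\bigl((\Sigma_{\mathcal{T}_i^1}\times\cdots\times\Sigma_{\mathcal{T}_i^{\beta_i}})\rtimes\Sigma_{\beta_i}\bigr)$, where the $i$-th factor is isomorphic to $\Sigma_i\wr\Sigma_{\beta_i}$ and, by the chosen ordering of the dots of $v$, is contained in $\Sigma_{\tilde{\gamma_i}}$; moreover the trivial $k\prod_\beta$-module is the outer tensor product of the trivial modules of these $t$ factors. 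Since $k\Sigma_{\tilde{\gamma}}=k\Sigma_{\tilde{\gamma_1}}\otimes_k\cdots\otimes_k k\Sigma_{\tilde{\gamma_t}}$ and $k\prod_\beta=\bigotimes_{i=1}^{t}k(\Sigma_i\wr\Sigma_{\beta_i})$ as $k$-algebras, tensoring over $k\prod_\beta$ distributes over the tensor factors and yields
$$k\Sigma_{\tilde{\gamma}}\tensorover{k\prod_\beta}k\ \simeq\ \Bigl(k\Sigma_{\tilde{\gamma_1}}\tensorover{k(\Sigma_1\wr\Sigma_{\beta_1})}k\Bigr)\boxtimes\cdots\boxtimes\Bigl(k\Sigma_{\tilde{\gamma_t}}\tensorover{k(\Sigma_t\wr\Sigma_{\beta_t})}k\Bigr).$$
For $i=1$ we have $\Sigma_1\wr\Sigma_{\beta_1}=\Sigma_{\beta_1}=\Sigma_{\tilde{\gamma_1}}$ (and any index $i$ with $\beta_i=0$ likewise contributes a trivial factor $k$), so the first tensorand is just $k$; inserting this isomorphism into the induction-in-stages formula of the previous paragraph produces precisely the asserted isomorphism.

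What remains is the bookkeeping: checking that the three displayed maps are well defined, mutually inverse, and $k\Sigma_{l_2}$- (respectively $k\Sigma_{\tilde{\gamma}}$-)linear. I expect no genuine difficulty here — the content of the lemma is purely a reorganisation of $k\Sigma_{l_2}\tensorover{k\prod_\beta}k$ into an induced outer tensor product of Foulkes-type modules $k\Sigma_{\tilde{\gamma_i}}\tensorover{k(\Sigma_i\wr\Sigma_{\beta_i})}k$, readying it for the Foulkes-module results of the next subsection. The one point that needs care is the compatibility of the subgroup inclusions, namely that $\prod_\beta\subseteq\Sigma_{\tilde{\gamma}}$ with the $i$-th wreath factor landing inside $\Sigma_{\tilde{\gamma_i}}$ — but this has already been arranged by the ordering of the dots of $v$, and once it is granted the whole argument is a formal manipulation of induced modules.
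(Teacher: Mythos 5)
Your proposal is correct and is essentially the argument the paper gives: the paper's explicit assignment $\tau\otimes 1\mapsto \epsilon_i\otimes(1\boxtimes(\tau_2\otimes 1)\boxtimes\cdots\boxtimes(\tau_t\otimes 1))$ via coset representatives of $\Sigma_{l_2}/\Sigma_{\tilde{\gamma}}$ is exactly the composite of your two standard isomorphisms (induction in stages through $\prod_\beta\subseteq\Sigma_{\tilde{\gamma}}\subseteq\Sigma_{l_2}$, followed by compatibility of induction with the outer tensor product over the direct product $\Sigma_{\tilde{\gamma}}=\prod_i\Sigma_{\tilde{\gamma_i}}$), with the same observation that the $i=1$ factor degenerates to the trivial module. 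Packaging the well-definedness checks as citations of these general facts rather than imitating Lemma \ref{lemma: theta} is a cosmetic difference only.
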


\begin{proof}
Let $\epsilon_1,...,\epsilon_u$ be coset representatives of $\Sigma_{l_2}/\Sigma_{\gamma}$ and let $\tau=\epsilon_i \tau_1\tau_2...\tau_t$ with $\tau_j \in \Sigma_{\gamma_j}$. Then $\Sigma_{l_2} \tensorover{k\Pi_\beta} k  \ni \tau \otimes 1 = \epsilon_i\tau_1\tau_2...\tau_t \otimes 1 = \epsilon_i\tau_2...\tau_t \otimes 1$ since $\Pi_\beta = \Sigma_{\beta_1} \times (\prod\limits_{j\geq 2} (\Sigma_j \wr \Sigma_{\beta_j}))$ and $\tau_1 \in \Sigma_{\gamma_1}=\Sigma_{\beta_1}$. The assignment 
$$
\tau \otimes 1 \mapsto \epsilon_i \otimes (1 \boxtimes (\tau_2\otimes 1) \boxtimes ... \boxtimes (\tau_t \otimes 1)) $$ defines the isomorphism, like in Lemma \ref{lemma: theta}.
\end{proof}

The module $H^{(a^m)}:=k\Sigma_{am} \tensorover{k(\Sigma_a \wr \Sigma_m)} k$ is called the \emph{Foulkes module} for the parameters $a$ and $m$. 
If the characteristic of the field $k$ is strictly greater than $m$, or zero, the Foulkes module is isomorphic to a direct summand of the permutation module $M^{(a^m)}:= k\Sigma_{am} \tensorover{k\Sigma_{(a^m)}} k$, as mentioned in \cite{G}. We will give a proof of this statement in Lemma \ref{lemma: Foulkes module}. In smaller positive characteristic, this is not true. In fact, Giannelli shows in \cite[Theorem 1.1]{G} that for $0 < \text{\textnormal{char}}k \leq m$ there is a non-projective summand of $H^{(a^m)}$ which is not a Young module. In general, it is not known  whether or not a Foulkes module $H^{(a^m)}$ has a Specht filtration in the case $0 < \text{\textnormal{char}}k \leq m$ and $a > 3$. 
The case $a=2$ was solved in \cite{P} for arbitrary characteristic of the field.  Further special cases for Foulkes modules with a (dual) Specht filtration can be found in \cite[Theorems 2 and 3]{WildonMultiplicity}.

\begin{lem}\label{lemma: Foulkes module}
If $\text{\textnormal{char}} k = 0$ or $\text{\textnormal{char}} k > m$, the Foulkes module $$H^{(a^m)}=k\Sigma_{am} \tensorover{k(\Sigma_a \wr \Sigma_m)} k$$ is isomorphic to a direct summand of the permutation module $M^{(a^m)}$. 
\end{lem}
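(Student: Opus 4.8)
The plan is to exhibit the Foulkes module $H^{(a^m)}$ as a direct summand of the permutation module $M^{(a^m)} = k\Sigma_{am}\tensorover{k\Sigma_{(a^m)}}k$ by constructing an explicit idempotent, or equivalently by splitting the natural surjection $M^{(a^m)} \twoheadrightarrow H^{(a^m)}$. First I would set $G=\Sigma_{am}$, $W=\Sigma_a\wr\Sigma_m$ and $Y=\Sigma_{(a^m)}=\Sigma_a\times\dots\times\Sigma_a$, the Young subgroup, so that $Y\trianglelefteq W$ with $W/Y\cong\Sigma_m$. Since $Y$ is normal in $W$ of index $m!$, and the hypothesis $\operatorname{char}k=0$ or $\operatorname{char}k>m$ makes $m!$ invertible in $k$, the element $\varepsilon=\frac{1}{m!}\sum_{w\in W/Y}w\in k\Sigma_m\subseteq kW$ (a set of coset representatives for $Y$ in $W$) is an idempotent in $kW$ with $\varepsilon\cdot kW\cdot\varepsilon$-action picking out the trivial $\Sigma_m$-isotypic part. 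Concretely, $k\uparrow^W_Y = kW\tensorover{kY}k$ contains the trivial $kW$-module $k$ as a direct summand, with projection given by $\varepsilon$: we have $kW\tensorover{kY}k = (kW\varepsilon \tensorover{kY}k)\oplus(kW(1-\varepsilon)\tensorover{kY}k)$ and the first summand is $\cong k$ as left $kW$-module.

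Next I would induce this splitting up to $G$. Applying the exact functor $kG\tensorover{kW}(-)$ to the decomposition of $kW\tensorover{kY}k$ as a direct sum of left $kW$-modules gives
\[
M^{(a^m)} = kG\tensorover{kY}k \;=\; kG\tensorover{kW}\bigl(kW\tensorover{kY}k\bigr) \;\cong\; \bigl(kG\tensorover{kW}k\bigr)\oplus\bigl(kG\tensorover{kW}(kW(1-\varepsilon)\tensorover{kY}k)\bigr),
\]
using transitivity of induction for the middle identification. The first summand is exactly $H^{(a^m)}=kG\tensorover{kW}k$, which is therefore a direct summand of $M^{(a^m)}$ as a left $k\Sigma_{am}$-module. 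To make this fully rigorous I would verify that $\varepsilon$ is genuinely an idempotent in $kW$ — this needs the coset representatives of $Y$ in $W$ to be chosen inside a complement, i.e. one should pick the representatives to be (the images of) $\Sigma_m$ acting by permuting the $m$ blocks, which is a subgroup of $W$; then $\varepsilon=\frac{1}{m!}\sum_{\pi\in\Sigma_m}\pi$ is visibly idempotent and central in $k\Sigma_m$, hence behaves well under multiplication by $Y$.

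The main technical point — the only place the characteristic hypothesis is used — is the invertibility of $m!$ guaranteeing that $\varepsilon$ exists as an idempotent; everything else (transitivity of induction, exactness of induction over a group algebra, the splitting of a module that contains an idempotent-cut summand) is formal. I expect the subtlety to be bookkeeping: one must be careful that $\varepsilon\in k\Sigma_m\subseteq kW$ really is an idempotent of $kW$ and not merely of $k\Sigma_m$, which is why choosing the block-permutation complement $\Sigma_m\le W$ rather than arbitrary coset representatives is essential. Alternatively, and perhaps more cleanly, I would phrase the whole argument as: $k$ is a direct summand of $k\uparrow^{\Sigma_m}_{\{1\}}=k\Sigma_m$ when $m!$ is invertible (the trivial module splits off via the central idempotent $\frac1{m!}\sum_{\pi\in\Sigma_m}\pi$), then inflate along $W\twoheadrightarrow\Sigma_m$ and induce to $\Sigma_{am}$, with $M^{(a^m)}=\operatorname{Ind}_W^{\Sigma_{am}}\operatorname{Infl}_{\Sigma_m}^{W}(k\Sigma_m)$ and $H^{(a^m)}=\operatorname{Ind}_W^{\Sigma_{am}}\operatorname{Infl}_{\Sigma_m}^{W}(k)$.
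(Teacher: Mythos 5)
Your argument is correct, but it takes a more structural route than the paper. The paper proves the lemma by hand: it identifies $H^{(a^m)}$ with the span of set partitions into $m$ blocks of size $a$, defines the obvious surjection $\Phi\colon M^{(a^m)}\to H^{(a^m)}$ sending a tabloid to its underlying set partition, and exhibits an explicit section $\Psi$ sending a set partition to $\frac{1}{m!}\sum_{\sigma\in\Sigma_m}\sigma\ast(\text{tabloid})$ (averaging over row permutations), then verifies equivariance of both maps and $\Phi\Psi=\mathrm{id}$. Your averaging idempotent $\varepsilon=\frac{1}{m!}\sum_{\pi\in\Sigma_m}\pi$ is exactly the same averaging idea, but packaged via $Y=\Sigma_{(a^m)}\trianglelefteq W=\Sigma_a\wr\Sigma_m$, transitivity of induction, and the splitting $k\Sigma_m\cong k\oplus C$; what this buys is conceptual clarity (the only input is invertibility of $m!$, i.e.\ semisimplicity of $k\Sigma_m$), no hand verification of equivariance, and immediate generalization (any summand of $k\Sigma_m$, not just the trivial one, yields a summand of $M^{(a^m)}$), whereas the paper's computation is elementary, self-contained, and gives the section concretely on basis elements. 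One caveat on your first formulation: the claimed decomposition $kW\tensorover{kY}k=(kW\varepsilon\tensorover{kY}k)\oplus(kW(1-\varepsilon)\tensorover{kY}k)$ is not literally licensed by $kW=kW\varepsilon\oplus kW(1-\varepsilon)$, since $kW\varepsilon$ is only a left ideal and need not be stable under right multiplication by $kY$ ($\varepsilon$ does not commute with $Y$ in general); to make that version rigorous you should let $\varepsilon$ act through $\End_{kW}(kW\tensorover{kY}k)\cong k[W/Y]\cong k\Sigma_m$ rather than inside $kW$. Your alternative phrasing at the end — identify $kW\tensorover{kY}k$ with the inflation of the regular module $k\Sigma_m$ along $W\twoheadrightarrow\Sigma_m$, split off the trivial summand there, and then induce to $\Sigma_{am}$ — is clean and fully correct, so this is only a matter of choosing the right formulation, not a gap.
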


\begin{proof}
At the end of Subsection 3.1, we determined that the stabilizer of the $m$ unlabelled parts of size $a$ of a partial diagram is isomorphic to the wreath product $\Sigma_a \wr \Sigma_m$. The Foulkes module $k\Sigma_{am} \tensorover{k(\Sigma_a \wr \Sigma_m)} k$ has a vector space basis indexed by left cosets $\Sigma_ {am}/(\Sigma_a \wr \Sigma_m)$. In terms of partial diagrams, each coset in $\Sigma_{am}/(\Sigma_a \wr \Sigma_m)$ contains the information which dots belong to the same part\footnote{Any element in the coset $\pi(\Sigma_a\wr\Sigma_m)$ sends the diagram $w$ with $m$ parts of size $a$ sitting side by side to the same partial diagram $w'$ via left multiplication, i.e. $|\pi(\Sigma_a\wr\Sigma_m)w|=1$.}. Thus, $k\Sigma_{am} \tensorover{k(\Sigma_a \wr \Sigma_m)} k$ has a vector space basis of set partitions of the form $$\{\{x_1,...,x_a\},...,\{x_{(m-1)a+1},...,x_{ma}\}\}$$ with $x_i \in\{1,...,am\}, x_i \neq x_j$ for $i\neq j$.

Recall that the permutation module $M^{(a^m)}$ has a basis of $(a^m)$-tabloids. To show that $H^{(a^m)}$ is a summand of $M^{(a^m)}$, we show that there is a split epimorphism $\Phi: \, M^{(a^m)} \to H^{(a^m)}$ with right-inverse $\Psi$. For the definition of the right inverse we need an action of $\Sigma_m$ on $(a^m)$-tabloids which is given by permutation of the rows. We denote this action by $\ast$. The usual action of $\Sigma_{am}$ on $(a^m)$-tabloids is denoted by $\cdot$ as usual.

Define maps $\xymatrix{M^{(a^m)} \ar@<0.5ex>[r]^\Phi & H^{(a^m)} \ar@<0.5ex>[l]^\Psi}$ with 
$$\begin{aligned}
\begin{array}{ccc} \cline{1-3} x_1 & ... & x_a \\ \cline{1-3} & \vdots & \\ \cline{1-3} x_{(m-1)a+1} & ... & x_{ma} \\ \cline{1-3} \end{array}  & \overset{\Phi}{\longmapsto}  & \{\{x_1,...,x_a\},...,\{x_{(m-1)a+1},...,x_{ma}\}\} \\\\
\frac{1}{m!} \sum\limits_{\sigma \in \Sigma_m} \sigma \ast \begin{array}{ccc}
\cline{1-3} x_1 & ... & x_a \\ \cline{1-3} & \vdots & \\ \cline{1-3} x_{(m-1)a+1} & ... & x_{ma} \\ \cline{1-3}
\end{array} & \overset{\Psi}{\longmapsfrom} &
\{\{x_1,...,x_a\},...,\{x_{(m-1)a+1},...,x_{ma}\}\} 
\end{aligned}
$$

The $\ast$ and $\cdot$ actions commute: Let $\sigma \in \Sigma_m, \tau \in \Sigma_{am}$ and $x_i$ in row $k$ of the tabloid $x$. If $\sigma(k)=l$, then $x_i$ is in row $l$ of $\sigma \ast x$, so $x_{\tau(i)}$ is in row $l$ of $\tau\cdot (\sigma \ast x)$. On the other hand, $x_{\tau(i)}$ is in row $k$ of $\tau \cdot x$ and therefore in row $l$ of $\sigma \ast (\tau \cdot x)$.

We have $$\begin{aligned}
&\Phi \left(\tau \cdot \begin{array}{ccc} \cline{1-3} x_1 & ... & x_a \\ \cline{1-3} & \vdots & \\ \cline{1-3} x_{(m-1)a+1} & ... & x_{ma} \\ \cline{1-3} \end{array}\right) \\
= & \Phi \left( \begin{array}{ccc} \cline{1-3} x_{\tau(1)} & ... & x_{\tau(a)} \\ \cline{1-3} & \vdots & \\ \cline{1-3} x_{\tau((m-1)a+1)} & ... & x_{\tau(ma)} \\ \cline{1-3} \end{array} \right) \\
= & \{\{x_{\tau(1)},...,x_{\tau(a)}\},...,\{x_{\tau((m-1)a+1)},...,x_{\tau(ma)}\}\} \\
 = &\tau \cdot \{\{x_1,...,x_a\},...,\{x_{(m-1)a+1},...,x_{ma}\}\} \\
 = &\tau \Phi\left( \begin{array}{ccc} \cline{1-3} x_1 & ... & x_a \\ \cline{1-3} & \vdots & \\ \cline{1-3} x_{(m-1)a+1} & ... & x_{ma} \\ \cline{1-3} \end{array}\right) \end{aligned}$$ 
 and 
 $$\begin{aligned}
& \Psi\left(\tau \cdot \{\{x_1,...,x_a\},...,\{x_{(m-1)a+1},...,x_{ma}\}\}\right) \\
 = &\Psi\left(\{\{x_{\tau(1)},...,x_{\tau(a)}\},...,\{x_{\tau((m-1)a+1)},...,x_{\tau(ma)}\}\}\right) \\
 = & \frac{1}{m!}\sum\limits_{\sigma\in\Sigma_m} \sigma\ast \begin{array}{ccc} \cline{1-3} x_{\tau(1)} & ... & x_{\tau(a)} \\ \cline{1-3} & \vdots & \\ \cline{1-3} x_{\tau((m-1)a+1)} & ... & x_{\tau(ma)} \\ \cline{1-3} \end{array} \\
 = & \frac{1}{m!} \sum\limits_{\sigma\in\Sigma_m} \sigma\ast \left( \tau \cdot \begin{array}{ccc} \cline{1-3} x_1 & ... & x_a \\ \cline{1-3} & \vdots & \\ \cline{1-3} x_{(m-1)a+1} & ... & x_{ma} \\ \cline{1-3} \end{array}\right) \\
 = & \tau \cdot \left(\frac{1}{m!} \sum\limits_{\sigma\in\Sigma_m} \sigma\ast \begin{array}{ccc} \cline{1-3} x_1 & ... & x_a \\ \cline{1-3} & \vdots & \\ \cline{1-3} x_{(m-1)a+1} & ... & x_{ma} \\ \cline{1-3} \end{array}\right)\\
= & \tau \cdot \Psi\left(\{\{x_1,...,x_a\},...,\{x_{(m-1)a+1},...,x_{ma}\}\}\right).\end{aligned}$$
Hence, $\Psi$ and $\Phi$ are $k\Sigma_{am}$-module homomorphisms. $\Phi$ is surjective and $\Phi\Psi$ is the identity on $H^{(a^m)}$, so $\Phi$ is a split epimorphism.  
\end{proof}

\begin{cor}\label{cor: Foulkes Specht}
 If $\text{\textnormal{char}} k = 0$ or $\text{\textnormal{char}}k > m$, the indecomposable direct summands of the Foulkes module $H^{(a^m)}=k\Sigma_{am}\tensorover{k(\Sigma_a\wr\Sigma_m)} k$ are Young modules. In particular, $H^{(a^m)}$ is both Specht and dual Specht filtered. 
\end{cor}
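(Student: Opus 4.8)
The plan is to read off the statement from Lemma~\ref{lemma: Foulkes module}. That lemma shows that, under the stated hypothesis ($\text{\textnormal{char}} k = 0$ or $\text{\textnormal{char}} k > m$), the Foulkes module $H^{(a^m)}$ is a direct summand of the permutation module $M^{(a^m)}$ of $k\Sigma_{am}$. I would then invoke the classification of the indecomposable direct summands of a Young permutation module: every indecomposable summand of $M^\mu$ is a Young module $Y^\lambda$, with $Y^\mu$ occurring with multiplicity one and the remaining $\lambda$ dominating $\mu$. Since the category $k\Sigma_{am}\text{-}\mod$ is Krull--Schmidt, the indecomposable summands of $H^{(a^m)}$ occur among those of $M^{(a^m)}$ and are therefore Young modules; this gives the first assertion.

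For the "in particular" part I would use the two standard facts that (i) every Young module $Y^\lambda$ admits a Specht filtration, and (ii) every Young module is self-dual (being an indecomposable direct summand of the self-dual permutation module $M^\lambda$, with self-duality pinned down by the combinatorial label $\lambda$). Fact (i) is classical; conceptually it follows by applying the Schur functor to an indecomposable tilting module for the relevant Schur algebra $S(am,am)$, since every tilting module carries both a Weyl ($\Delta$-) and a dual Weyl ($\nabla$-) filtration, and the Schur functor sends these to a Specht filtration and a dual Specht filtration respectively. Either directly from the $\nabla$-filtration, or by applying $k$-linear duality to a Specht filtration of $Y^\lambda$ and using (ii) — so that each subquotient $S^{\lambda_i}$ becomes its dual $S_{\lambda_i}$ — one obtains a dual Specht filtration of $Y^\lambda$. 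Finally, a finite direct sum of modules admitting a (dual) Specht filtration again admits one, so writing $H^{(a^m)} \cong \bigoplus_i Y^{\lambda_i}$ as a sum of indecomposables shows that $H^{(a^m)}$ is both Specht and dual Specht filtered.

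The only substantive ingredient is the fact that Young modules possess Specht filtrations; everything else is Krull--Schmidt bookkeeping together with the self-duality of permutation modules. It is exactly at this point that the characteristic hypothesis is needed: by Giannelli's theorem quoted in the introduction, for $0 < \text{\textnormal{char}} k \le m$ and $a>3$ the module $H^{(a^m)}$ may have an indecomposable non-projective summand that is not a Young module, so the reduction carried out above already fails at its first step.
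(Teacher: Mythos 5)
Your argument is correct and follows essentially the same route as the paper, which states this result as an immediate consequence of Lemma~\ref{lemma: Foulkes module} together with the standard facts that the indecomposable direct summands of a Young permutation module are Young modules and that Young modules are self-dual and Specht filtered. You have merely made explicit the Krull--Schmidt and duality bookkeeping that the paper leaves implicit, and your closing remark correctly locates the use of the characteristic assumption in the splitting of $M^{(a^m)}$ from Lemma~\ref{lemma: Foulkes module}.
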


\begin{cor}\label{cor: iterated Foulkes}
 If $\text{\textnormal{char}} k = 0$ or $\text{\textnormal{char}}k > \max \beta_i$, then
$k\Sigma_{l_2} \tensorover{k\prod_\beta} k \in \mathcal{F}_{l_2}(S)$.
\end{cor}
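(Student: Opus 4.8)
The plan is to combine Lemma~\ref{lemma: tensor product of foulkes modules}, Corollary~\ref{cor: Foulkes Specht} and the characteristic-free Littlewood--Richardson rule. First I would apply Lemma~\ref{lemma: tensor product of foulkes modules} to rewrite $k\Sigma_{l_2} \tensorover{k\prod_\beta} k$ as
$$
k\Sigma_{l_2} \tensorover{k\Sigma_{\tilde{\gamma}}} \Bigl( k \boxtimes \bigl(k\Sigma_{\tilde{\gamma_2}} \tensorover{k(\Sigma_2\wr\Sigma_{\beta_2})} k\bigr) \boxtimes \cdots \boxtimes \bigl(k\Sigma_{\tilde{\gamma_t}} \tensorover{k(\Sigma_t\wr\Sigma_{\beta_t})} k\bigr) \Bigr).
$$
For $2 \leq i \leq t$ the $i$-th factor is exactly the Foulkes module $H^{(i^{\beta_i})}$, and since $\text{\textnormal{char}}\,k = 0$ or $\text{\textnormal{char}}\,k > \max_j \beta_j \geq \beta_i$, Corollary~\ref{cor: Foulkes Specht} gives $H^{(i^{\beta_i})} \in \mathcal{F}_{i\beta_i}(S)$; the first factor is the trivial module $k = S_{(\beta_1)}$, which also lies in $\mathcal{F}_{\beta_1}(S)$. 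So the claim reduces to the statement that inducing an outer tensor product of dual Specht filtered modules over a Young subgroup produces a dual Specht filtered module.

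For this reduction I would proceed in two formal steps, treating two factors at a time and iterating on $t$. If $M_1 \in \mathcal{F}_a(S)$ and $M_2 \in \mathcal{F}_b(S)$ with filtration sections $S_\lambda$ and $S_\mu$ respectively, then refining the filtration of $M_1 \boxtimes M_2$ coming from the given filtration of $M_1$ by the one coming from the given filtration of $M_2$ yields a $k\Sigma_{(a,b)}$-module filtration of $M_1 \boxtimes M_2$ with sections the outer tensor products $S_\lambda \boxtimes S_\mu$ — this uses only exactness of $-\otimes_k-$ over a field and the definition of $\boxtimes$. Since $k\Sigma_{a+b}$ is free as a right $k\Sigma_{(a,b)}$-module, induction is exact, so it carries this filtration to a filtration of $k\Sigma_{a+b} \tensorover{k\Sigma_{(a,b)}} (M_1 \boxtimes M_2)$ whose sections are the modules $k\Sigma_{a+b} \tensorover{k\Sigma_{(a,b)}} (S_\lambda \boxtimes S_\mu)$. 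By the characteristic-free Littlewood--Richardson rule \cite{JP}, each such induced module lies in $\mathcal{F}_{a+b}(S)$, and since $\mathcal{F}_{a+b}(S)$ is closed under extensions and finite direct sums (immediate from its definition) it follows that $k\Sigma_{a+b} \tensorover{k\Sigma_{(a,b)}} (M_1 \boxtimes M_2) \in \mathcal{F}_{a+b}(S)$. Iterating over $i = 2,\dots,t$, grouping the factors already treated against $k\Sigma_{\tilde{\gamma_i}}$ and using transitivity of induction through $\Sigma_{\tilde{\gamma}} \subset \Sigma_{l_2}$, then gives $k\Sigma_{l_2} \tensorover{k\prod_\beta} k \in \mathcal{F}_{l_2}(S)$. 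The degenerate case $l_2 = 0$ (no unlabelled parts) is vacuous, the module being just $k$ over the trivial group.

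The step I expect to require the most care is the appeal to the characteristic-free Littlewood--Richardson rule to conclude that $k\Sigma_{a+b} \tensorover{k\Sigma_{(a,b)}} (S_\lambda \boxtimes S_\mu)$ is dual Specht filtered, together with the bookkeeping needed to assemble the induced sections at each stage into a single filtration; the remaining ingredients — exactness of induction, the behaviour of $\boxtimes$ under filtrations, and closure of $\mathcal{F}(S)$ under extensions — are routine. One should also double-check that $\max_j \beta_j$ is the correct characteristic bound, i.e.\ that every Foulkes factor $H^{(i^{\beta_i})}$ appearing has its second exponent $\beta_i$ bounded by $\max_j \beta_j$, which is clear.
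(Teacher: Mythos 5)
Your proposal is correct and follows essentially the same route as the paper: apply Lemma~\ref{lemma: tensor product of foulkes modules}, use Corollary~\ref{cor: Foulkes Specht} on each Foulkes factor under the hypothesis $\text{\textnormal{char}}\,k = 0$ or $\text{\textnormal{char}}\,k > \max\beta_i$, and then invoke the characteristic-free Littlewood--Richardson rule \cite{JP} for the induced exterior tensor product. The extra detail you supply (refining the filtrations of the outer tensor factors, exactness of induction, iteration over the factors) is exactly the bookkeeping the paper leaves implicit in its appeal to \cite{JP}.
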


\begin{proof}
By Lemma \ref{lemma: tensor product of foulkes modules}, $k\Sigma_{l_2} \tensorover{k\prod_\beta} k$ is induced from an exterior tensor product of Foulkes modules. Corollary \ref{cor: Foulkes Specht} shows that the Foulkes modules are dual Specht filtered, provided the characteristic of the field is large enough. The characteristic-free version of the Littlewood-Richardson rule \cite{JP} then says that the exterior tensor product of Foulkes modules has a dual Specht filtration. 
\end{proof}

\subsection{Two-sided induction of permutation modules with bimodule structure}\label{subsec: permutation bimodule}
In this subsection, we examine the module $k\Sigma_{l_1} \tensorover{k\prod_\alpha} k\Sigma_n$ corresponding to the labelled parts of $v$. Recall that $l_1$ is the number of labelled dots in the top row of $d=d_v$ and that there are $\alpha_i$ labelled parts of size $i$. Let $s$ be the maximal size of a labelled part, then $l_1 = \sum\limits_{i=1}^s i\alpha_i$ and $n=\sum\limits_{i=1}^s \alpha_i$.

\begin{prop}\label{prop: decomposition of labelled parts into small diagrams} 
The $(k\Sigma_{l_1},k\Sigma_n)$-bimodule $k\Sigma_{l_1} \tensorover{k\prod_\alpha} k\Sigma_n$ is isomorphic to $$k\Sigma_{l_1} 
\tensorover{k\Sigma_{(\alpha_1,2\alpha_2,...,s\alpha_s)}} \left( (k\Sigma_{\alpha_1} \tensorover{k(\Sigma_1 \wr \Sigma_{\alpha_1})} k\Sigma_{\alpha_1}) \boxtimes ... \boxtimes (k\Sigma_{s\alpha_s} \tensorover{k(\Sigma_s \wr \Sigma_{\alpha_s})} k\Sigma_{\alpha_s} )\right) \tensorover{k\Sigma_\alpha} k\Sigma_n.$$
\end{prop}

\begin{figure}[h!]\caption{Maps for Proposition \ref{prop: decomposition of labelled parts into small diagrams}. The maps $\tilde{\psi}$ and $\theta$ are the maps from Lemmas 2 and 3, respectively. The remaining maps are defined in the course of the proof of the proposition.}\label{figure}
$$\xymatrix{
& k\Sigma_l\tensorover{k\Sigma_{(l_1,l_2)}}((k\Sigma_{l_1} \tensorover{k\prod_\alpha}k\Sigma_n)\boxtimes(k\Sigma_{l_2}\tensorover{k\prod_\beta}k)) \ar@{->>}[dl]_-{\rho_3}\\
\mathbf{k\Sigma_{l_1}\tensorover{k\prod_\alpha} k\Sigma_n} \ar[ddr]^-{\rho_1} & k\Sigma_l\tensorover{k(\prod_\alpha \times \prod_\beta)} k\Sigma_n \ar[u]^-{\wr}_-{\theta} \\
& U_v \ar[u]^-{\wr}_-{\tilde{\psi}}\\
& \mathbf{k\Sigma_{l_1}\tensorover{k\Sigma_{(\alpha_1,2\alpha_2,...,s\alpha_s)}}\left(\underset{i}{\boxtimes} (k\Sigma_{i\alpha_i} \tensorover{k(\Sigma_i \wr \Sigma_{\alpha_i})}k\Sigma_{\alpha_i})\right) \tensorover{k\Sigma_\alpha} k\Sigma_n} \ar[u]_-{\rho_2}}$$
\end{figure}

\begin{proof}
We prove this by giving explicit homomorphisms in both directions, as depicted in Figure \ref{figure}. 
The inverse map is given by $\rho_1^{-1}=\rho_3 \circ \theta \circ \tilde{\psi} \circ \rho_2$.
 
Fix coset representatives $\epsilon_1,...,\epsilon_p$ of $\Sigma_{l_1}/\Sigma_{(\alpha_1,2\alpha_2,...,s\alpha_s)}$ and $\eta_1,...,\eta_q$ of $\Sigma_\alpha\backslash \Sigma_n$.
Define a map $$\rho_1 :\, k\Sigma_{l_1} \tensorover{k\prod_\alpha} k\Sigma_n \longrightarrow k\Sigma_{l_1} \tensorover{k\Sigma_{(\alpha_1,2\alpha_2,...,s\alpha_s)}} \left( \underset{i}{\boxtimes} (k\Sigma_{i\alpha_i} \tensorover{k(\Sigma_i \wr \Sigma_{\alpha_i})} k\Sigma_{\alpha_i})\right) \tensorover{k\Sigma_\alpha} k\Sigma_n$$ by sending $\tau \otimes \sigma$ to $\epsilon \otimes (\underset{i}{\boxtimes} (\tau_i \otimes \sigma_i)) \otimes \eta$, if $\tau = \epsilon \tau_1\tau_2...\tau_s$ and $\sigma=\sigma_1\sigma_2...\sigma_s\eta$ with $\tau_i \in \Sigma_{i\alpha_i}$, $\sigma_i \in \Sigma_{\alpha_i}$ for $i=1,...,s$ and $\epsilon \in \{\epsilon_1,...,\epsilon_p\}$, $\eta \in \{\eta_1,...,\eta_q\}$. We show that this map is well-defined and leave the proof that it is a $(k\Sigma_{l_1},k\Sigma_n)$-bimodule homomorphism to the reader (works analoguosly to Lemma 3).
Let $\tau = \epsilon\tau_1...\tau_s \in \Sigma_{l_1}$ with $\epsilon \in \{\epsilon_1,...,\epsilon_p\}, \, \tau_i \in \Sigma_{i\alpha_i}$, and $\sigma = \sigma_1...\sigma_s\eta \in \Sigma_n$ with $\eta \in \{\eta_1,...,\eta_q\}, \, \sigma_i \in \Sigma_{\alpha_i}$. Let $\zeta = (\xi_{1,1},...,\xi_{1,{\alpha_1}};\zeta_1)...(\xi_{s,1},...,\xi_{s,{\alpha_s}};\zeta_s) \in \prod_\alpha$ with $\xi_{i,j} \in \Sigma_{\mathcal{S}_i^j}, \, \zeta_i \in \Sigma_{\alpha_i}$, in particular $(\xi_{i,1},...,\xi_{i,{\alpha_i}};\zeta_i)$ and $\tau_i$ are elements of the same symmetric group 
$\Sigma_{i\alpha_i}$.
Then $\tau\zeta = \epsilon\tau_1...\tau_s(\xi_{1,1},...,\xi_{1,{\alpha_1}};\zeta_1)...(\xi_{s,1},...,\xi_{s,{\alpha_s}};\zeta_s) = \epsilon\tau_1(\xi_{s,1},...,\xi_{s,{\alpha_s}};\zeta_s)\tau_2(\xi_{2,1},...,\xi_{2,{\alpha_2}};\zeta_2)...\tau_s(\xi_{s,1},...,\xi_{s,{\alpha_s}};\zeta_s)$ and $\zeta \cdot \sigma = \Pi(\zeta d)\sigma = \zeta_1\zeta_2...\zeta_s\sigma_1...\sigma_s\eta = \zeta_1\sigma_1\zeta_2\sigma_2...\zeta_s\sigma_s\eta$. Now we can calculate the images of $\tau \zeta \otimes \sigma$ and $\tau \otimes \Pi(\zeta d)\sigma$ under the map $\rho_1$ and compare them:  $$\rho_1(\tau\zeta \otimes \sigma) = \epsilon \otimes \left( \underset{i}{\boxtimes} (\tau_i(\xi_{i,1},...,\xi_{i,{\alpha_i}};\zeta_i) \otimes \sigma_i) \right) \otimes \eta$$ 
and $$\rho_1(\tau \otimes \Pi(\zeta d)\sigma) = \rho_1(\tau \otimes \zeta_1\sigma_1\zeta_2\sigma_2...\zeta_s\sigma_s\eta) = \epsilon \otimes \left(\underset{i}{\boxtimes} (\tau_i \otimes \zeta_i\sigma_i)\right) \otimes \eta.$$
Since $(\xi_{i,1},...,\xi_{i,{\alpha_i}};\zeta_i) \in \Sigma_i \wr \Sigma_{\alpha_i}$, we can move this element across the tensor to the right and $\Pi((\xi_{i,1},...,\xi_{i,{\alpha_i}};\zeta_i)d)=\zeta_i$, so $\rho_1(\tau\zeta \otimes \sigma) = \rho_1(\tau \otimes \Pi(\zeta d)\sigma)$ and the map is well-defined.

Next, we construct a map $$\rho_2:\, k\Sigma_{l_1} \tensorover{k\Sigma_{(\alpha_1,2\alpha_2,...,s\alpha_s)}} \left( \underset{i}{\boxtimes} (k\Sigma_{i\alpha_i} \tensorover{k(\Sigma_i \wr \Sigma_{\alpha_i})} k\Sigma_{\alpha_i})\right) \tensorover{k\Sigma_\alpha} k\Sigma_n \longrightarrow U_v,$$ which will be the crucial ingredient for the inverse of $\rho_1$. For this map, we need a special diagram $x$ which is obtained as follows. For $i=1,...,s$, we set $d_i$ to be the diagram in $P_k(i\alpha_i,\delta)e_{\alpha_i}$ with top row consisting of $\alpha_i$ parts of size $i$ sitting side by side and $\alpha_i$ non-crossing propagating lines. The bottom row consists of $\alpha_i-1$ singletons followed by a part of size $i\alpha_i-(\alpha_i-1)$. Then we have $s$ small diagrams $x_i := \tau_i d_i \sigma_i \in P_k(i\alpha_i,\delta)e_{\alpha_i}$, one for each $i \in \{1,...,s\}$, which we can combine to a big diagram $x' \in P_k(l_1,\delta)$ by simply writing the diagrams next to each other. To obtain the diagram $x \in P_k(r,\delta)e_n$, we now change the bottom row such that the first $\alpha_1$ dots belong to the first $\alpha_1$ dots of the bottom row of $x_1$, the following $\alpha_2$ dots belong to the first $\alpha_2$ dots of the bottom row of $x_2$ and so on. After we have rearranged the $\alpha_s$ dots of the bottom row of $x_s$, we connect all remaining $r-n$ dots to the rightmost of these $\alpha_s$ dots to get one big part of size $r-n+1$. In the top row, we add the unlabelled parts of $v$ in the same order as they appear in $v$. Then this new diagram $x$ lies in $U_v$. The construction of $x$ is illustrated by an example following the proof.
Define $$\rho_2(\epsilon\otimes (\boxtimes (\tau_i \otimes \sigma_i)) \otimes \eta) = \epsilon x \eta$$ for $\epsilon \in \Sigma_{l_1}$, $\tau_i \in \Sigma_{i\alpha_i}$, $\sigma_i \in \Sigma_{\alpha_i}$ and $\eta \in \Sigma_n$.
By the construction of $x$, $\epsilon x\eta = \tau d\sigma$, where as before $\tau = \epsilon\tau_1...\tau_s$, $d=d_v$ and $\sigma = \sigma_1...\sigma_s\eta$. 
We have to check that $\rho_2$ is well-defined, i.e. we have to check that whenever we move an element across a tensor, we get the same diagram in $U_v$. Let $\epsilon \in \Sigma_{l_1}$, $\xi =(\xi_1,...\xi_s) \in \Sigma_{(\alpha_1,2\alpha_2,...)}$, $\tau_i \in \Sigma_{i\alpha_i}$, $\zeta \in \Sigma_i \wr \Sigma_{\alpha_i}$, $\sigma_i \in \Sigma_{\alpha_i}$, $\vartheta = (\vartheta_1,,...,\vartheta_s) \in \Sigma_\alpha$ and $\eta \in \Sigma_n$. Denote the diagram $x$ by $x_{(\tau_i,\sigma_i)}$. 

It is $\rho_2(\epsilon\xi \otimes (\boxtimes (\tau_i \otimes \sigma_i))\otimes \eta) = \epsilon\xi x_{(\tau_i,\sigma_i)}\eta$ and $\rho_2(\epsilon \otimes (\boxtimes (\xi_i\tau_i \otimes \sigma_i)) \otimes \eta) = \epsilon x_{(\xi_i\tau_i,\sigma_i)} \eta$. There is no difference whether we first write the diagrams $x_i=\tau_id_i\sigma_i$ next to each other (and rearrange the bottom row, add dots and connections to the right) and then multiply by $\xi =(\xi_1,...\xi_s)$ or we start with multiplying each diagram $x_i$ by $\xi_i$ and then write them next to each other (and rearrange the bottom row, add dots and connections to the right). Hence, we can move $\xi=(\xi_1,...,\xi_s) \in \Sigma_{(\alpha_1,2\alpha_s,...)}$ across the first tensor product to or from the respective exterior tensor products.

It is $\rho_2(\epsilon\otimes(\boxtimes(\tau_i\zeta \otimes \sigma_i)) \otimes \eta) = \epsilon x_{(\tau_i\zeta,\sigma_i)}\eta$ and $\rho_2(\epsilon\otimes(\boxtimes(\tau_i \otimes \Pi(\zeta d_i)\sigma_i)) \otimes \eta) = \epsilon x_{(\tau_i,\Pi(\zeta d_i)\sigma_i)}\eta$. Since $\tau_i\zeta d_i\sigma_i = \tau_i d_i\Pi(\zeta d_i)\sigma_i$ for $\zeta \in \Sigma_i \wr \Sigma_{\alpha_i}$, we have $x_{(\tau_i\zeta,\sigma_i)} = x_{(\tau_i,\Pi(\zeta d_i)\sigma_i)}$ and we can move $\zeta$ across the interior tensor products within the exterior tensor product.

It is $\rho_2(\epsilon\otimes(\boxtimes(\tau_i \otimes \sigma_i\vartheta_i)) \otimes \eta) = \epsilon x_{(\tau_i,\sigma_i\vartheta_i)}\eta$ and $\rho_2(\epsilon\otimes(\boxtimes(\tau_i \otimes \sigma_i)) \otimes \vartheta\eta) = \epsilon x_{(\tau_i,\sigma_i)}\vartheta\eta$. Again, there is no difference whether we first write the diagrams $x_i$ next to each other (and rearrange the bottom row, add dots and connections to the right) and then multiply by $\vartheta$ from below, or we start with multiplying each diagram $x_i$ by $\vartheta_i$ and then write them next to each other (and rearrange the bottom row, add dots and connections to the right). Hence, we can move an element $\vartheta=(\vartheta_1,...,\vartheta_s) \in \Sigma_\alpha$ across the third interior tensor product.    

This shows that $\rho_2$ is well-defined. We see that $\rho_2$ is a $(k\Sigma_{l_1},k\Sigma_n)$-bimodule homomorphism analogously to Lemma \ref{lemma: psi}. 

We continue with an analysis of the image of $\rho_2$. Assume that $1_{\Sigma_l}$ is chosen as one of the coset representatives $\omega$ of $\Sigma_l/\Sigma_{(l_1,l_2)}$, cf. Subsection \ref{subsec: U as tensor product}, page \pageref{cosets omega}.
Using the maps $\tilde{\psi}$ from Lemma \ref{lemma: psi} and $\theta$ from Lemma \ref{lemma: theta}, we see that $\epsilon x \eta = \tau d \sigma$ is sent to $1_{\Sigma_l} \otimes ((\tau \otimes \sigma) \boxtimes (1_{\Sigma_{l_2}} \otimes 1)) \in k\Sigma_l \tensorover{k\Sigma_{(l_1,l_2)}} \left( (k\Sigma_{l_1} \tensorover{k\prod_\alpha} k\Sigma_n) \boxtimes (k\Sigma_{l_2} \tensorover{k\prod_\beta} k)\right)$ by $\theta \circ \tilde{\psi}$. So the image of $\rho_2$ is isomorphic to $k\Sigma_{l_1} \tensorover{k\prod_\alpha} k\Sigma_n$ and $\rho_3 \circ \theta \circ \tilde{\psi} \circ \rho_2$ is a candidate for the inverse of $\rho_1$, where $\rho_3$ is the projection $$k\Sigma_l \tensorover{k\Sigma_{(l_1,l_2)}} \left((k\Sigma_{l_1} \tensorover{k\prod_\alpha} k\Sigma_n) \boxtimes (k\Sigma_{l_2} \tensorover{k\prod_\beta} k)\right) \twoheadrightarrow k\Sigma_{l_1} \tensorover{k\prod_\alpha} k\Sigma_n.$$ Note that $\rho_3 \circ \theta$ is the identity on $k\Sigma_{l_1} \tensorover{k\prod_\alpha} k\Sigma_n \subset k\Sigma_l \tensorover{k(\prod_\alpha \times \prod_\beta)} k\Sigma_n$. 
  
Let $\tau \otimes \sigma \in k\Sigma_{l_1} \tensorover{k\prod_\alpha} k\Sigma_n$ as before. Then 

$\begin{aligned} 
(\rho_3 \circ \theta \circ \tilde{\psi} \circ \rho_2 \circ \rho_1)(\tau \otimes \sigma) & = (\rho_3 \circ \theta \circ \tilde{\psi} \circ \rho_2)(\epsilon \otimes (\boxtimes (\tau_i \otimes \sigma_i))\otimes \eta) \\ 
& = (\rho_3 \circ \theta \circ \tilde{\psi})(\epsilon x \eta) \\
& = (\rho_3 \circ \theta \circ \tilde{\psi})(\tau d \sigma) \\
&  = (\rho_3 \circ \theta)(\tau \otimes \sigma) \\
& = \tau \otimes \sigma \end{aligned}$

Now let $\epsilon \otimes (\boxtimes (\tau_i \otimes \sigma_i)) \otimes \eta \in k\Sigma_{l_1} \tensorover{k\Sigma_{(\alpha_1,2\alpha_2,...,s\alpha_s)}} \left(\boxtimes (k\Sigma_{i\alpha_i} \tensorover{k(\Sigma_i \wr \Sigma_{\alpha_i})} k\Sigma_{\alpha_i})\right) \tensorover{k\Sigma_\alpha} k\Sigma_n$, then 

$\begin{aligned} 
(\rho_1 \circ \rho_3 \circ \theta \circ \tilde{\psi} \circ \rho_2)(\epsilon \otimes (\boxtimes(\tau_i \otimes \sigma_i))\otimes \eta)) & = (\rho_1 \circ \rho_3 \circ \theta \circ \tilde{\psi})(\epsilon x \eta) \\
& = (\rho_1 \circ \rho_3 \circ \theta \circ \tilde{\psi})(\epsilon\tau_1...\tau_s d \sigma_1...\sigma_s\eta) \\
& = (\rho_1 \circ \rho_3 \circ \theta)(\epsilon\tau_1...\tau_s \otimes \sigma_1...\sigma_s\eta) \\
& = \rho_1(\epsilon\tau_1...\tau_s \otimes \sigma_1...\sigma_s\eta) \\
& = \epsilon \otimes (\boxtimes (\tau_i \otimes \sigma_i)) \otimes \eta) \end{aligned}$

This shows that $\rho_3 \circ \theta \circ \tilde{\psi} \circ \rho_2$ is indeed the two-sided inverse of $\rho_1$.  
\end{proof}

We illustrate the isomorphism with an example. 
\begin{ex}\label{ex sec generalised Foulkes} Let $l_1=12$, $l_2 = 3$, $n=5$, $\alpha_1=0$, $\alpha_2 = 3$, $\alpha_3=2$ and $\beta_1 = \beta_2 = 1$. Then $v = \xysmall{\circ \tra[r] & \circ & \circ \tra[r] & \circ & \circ \tra[r] & \circ & \circ \tra[r] & \circ \tra[r] & \circ & \circ \tra[r] & \circ \tra[r] & \circ & \bullet & \bullet \tra[r] & \bullet}$. Assume that we have fixed coset representatives of $\Sigma_{l_1}/\Sigma_{(6,6)}$ and $\Sigma_{(3,2)}\backslash \Sigma_n$ such that $\epsilon = (2748)$ and $\eta=(1234)$ are such representatives, respectively. Let $\tau = (1347)(285)(9 \, 11)$ and $\sigma =(134)$, then $\tau \in \epsilon\Sigma_{(6,6)}$ and $\sigma \in \Sigma_{(3,2)}\eta$. We have $$\begin{aligned}\tau d \sigma &= &\begin{minipage}{4cm}
\xymatrixrowsep{10pt}\xymatrixcolsep{10pt}\xymatrix{
\bullet \arcu[rr]\tra[dr] & \bullet \arcu[rr]\tra[dl] & \bullet & \bullet\arcU[rrrrrrr] & \bullet\tra[dll]\arcu[rr] & \bullet \arcu[rr]\tra[dll] & \bullet & \bullet & \bullet\tra[r]\tra[dllll] & \bullet \arcu[rr] & \bullet & \bullet & \bullet & \bullet \tra[r] & \bullet \\ 
\bullet & \bullet& \bullet& \bullet& \bullet\tra[r]&\bullet\tra[r]&\bullet\tra[r]&\bullet\tra[r]&\bullet\tra[r]&\bullet\tra[r]&\bullet\tra[r]&\bullet \tra[r] & \bullet \tra[r] & \bullet \tra[r] & \bullet}
\end{minipage}\end{aligned}$$
Since $\tau = \epsilon (134)(25)(78)(9 \, 11)$ and $\sigma = (12)\eta$, the diagram $x$ is calculated as follows. $$x_2 = \tau_2d_2\sigma_2 = \begin{minipage}{4cm}
\xysmall{\bullet \arcu[rr]\tra[d] & \bullet \arcu[rrrr]\tra[dr] & \bullet & \bullet\tra[r]\tra[dll] & \bullet & \bullet \\
\bullet & \bullet & \bullet \tra[r] & \bullet \tra[r] & \bullet\tra[r] & \bullet}
\end{minipage}$$
and 
$$x_3 = \tau_3d_3\sigma_3 =  \begin{minipage}{4cm}
\xysmall{\bullet \tra[r]\tra[d] & \bullet \arcu[rrr] & \bullet \tra[r]\tra[dl]& \bullet\arcu[rr] & \bullet & \bullet \\
\bullet & \bullet \tra[r] & \bullet \tra[r] & \bullet \tra[r] & \bullet\tra[r] & \bullet}
\end{minipage}$$
with $\tau_2=(134)(25)$, $\tau_3 = (78)(9 \, 11)$, $\sigma_2=(12)$ and $\sigma_3=1_{\Sigma_2}$. Combining these two diagrams and rearranging the bottom row yields $$
x' = \begin{minipage}{4cm}
\xysmall{\bullet \arcu[rr]\tra[d] & \bullet \arcu[rrrr]\tra[dr] & \bullet & \bullet\tra[r]\tra[dll] & \bullet & \bullet & \bullet \tra[r]\tra[d] & \bullet \arcu[rrr] & \bullet \tra[r]\tra[dl]& \bullet\arcu[rr] & \bullet & \bullet \\
\bullet & \bullet & \bullet \tra[r] & \bullet \tra[r] & \bullet\tra[r] & \bullet & \bullet & \bullet \tra[r] & \bullet \tra[r] & \bullet \tra[r] & \bullet\tra[r] & \bullet}
\end{minipage} $$ and 
$$x = \begin{minipage}{4cm}
\xysmall{\bullet \arcu[rr]\tra[d] & \bullet \arcu[rrrr]\tra[dr] & \bullet & \bullet\tra[r]\tra[dll] & \bullet & \bullet & \bullet \tra[r]\tra[dlll] & \bullet \arcu[rrr] & \bullet \tra[r]\tra[dllll]& \bullet\arcu[rr] & \bullet & \bullet & \bullet & \bullet \tra[r] & \bullet \\
\bullet & \bullet & \bullet  & \bullet  & \bullet\tra[r] & \bullet \tra[r] & \bullet \tra[r] & \bullet \tra[r] & \bullet \tra[r] & \bullet \tra[r] & \bullet\tra[r] & \bullet \tra[r] & \bullet \tra[r] & \bullet \tra[r] & \bullet }
\end{minipage} $$
We then get $$\begin{aligned}
\epsilon x \eta & = & \begin{minipage}{5cm}
\xymatrixrowsep{10pt}\xymatrixcolsep{10pt}\xymatrix{
\bullet \arcu[rr]\tra[dr] & \bullet \arcu[rr]\tra[dl] & \bullet & \bullet \arcU[rrrrrrr] & \bullet\arcu[rr]\tra[dll] & \bullet\arcu[rr]\tra[dll] & \bullet & \bullet & \bullet\tra[r]\tra[dllll]& \bullet\arcu[rr]& \bullet& \bullet & \bullet & \bullet \tra[r] & \bullet\\
\bullet &\bullet&\bullet&\bullet&\bullet\tra[r]&\bullet\tra[r]&\bullet\tra[r]&\bullet\tra[r]&\bullet\tra[r]&\bullet\tra[r]&\bullet\tra[r]&\bullet\tra[r] & \bullet\tra[r] & \bullet\tra[r] & \bullet}
\end{minipage} & = \tau d \sigma. \end{aligned} $$
\end{ex}

The following Lemma \ref{lemma: permutation module} gives an interpretation for the smaller diagrams $\tau_id_i\sigma_i$ obtained in the above proof. It requires the definition of a right $k\Sigma_{\alpha_i}$-module structure on  $M^{(i^{\alpha_i})} = k\Sigma_{i\alpha_i} \tensorover{k\Sigma_{(i^{\alpha_i})}} k$ which is compatible with the usual left $k\Sigma_{i\alpha_i}$-module structure to obtain a bimodule. 

\begin{dfprop}\label{defprop} The permutation module $M^{(i^{\alpha_i})} = k\Sigma_{i\alpha_i} \tensorover{k\Sigma_{(i^{\alpha_i})}} k$ has a right $k\Sigma_{\alpha_i}$-module structure which is compatible with the usual left $k\Sigma_{i\alpha_i}$-module structure. 
\end{dfprop}

\begin{proof}
Let $\tau \in \Sigma_{i\alpha_i}$ and $\sigma \in \Sigma_{\alpha_i}$. Set $(\tau \otimes 1)\sigma = \tau \hat{\sigma} \otimes 1$ (where the tensor product is taken over $k\Sigma_{(i^{\alpha_i})}$) for some $\hat{\sigma} \in \Sigma_i \wr \Sigma_{\alpha_i}$ such that the image of $\hat{\sigma}$ under the canonical epimorphism $\Sigma_i \wr \Sigma_{\alpha_i} \twoheadrightarrow \Sigma_{\alpha_i}$ is $\sigma$, i.e. such that $\Pi(\hat{\sigma}d_i)=\sigma$. Extend this multiplication linearly.

First, we have to show that the given right module structure is independent of the choice of $\hat{\sigma}$. For each $\sigma \in \Sigma_{\alpha_i}$ we fix an element $\hat{\sigma} \in \Sigma_i \wr \Sigma_{\alpha_i}$ such that $\Pi(\hat{\sigma}d_i) = \sigma$. If $\bar{\sigma} \in \Sigma_i\wr\Sigma_{\alpha_i}$ is another element with $\Pi(\bar{\sigma}d_i) = \sigma$, then there must be an element $\zeta \in \Sigma_{(i^{\alpha_i})}$ such that $\bar{\sigma} = \hat{\sigma}\zeta$. 
Hence $\tau\bar{\sigma}\otimes 1 = \tau\hat{\sigma}\zeta\otimes 1 = \tau\hat{\sigma}\otimes 1$ and the module structure is well-defined. 

Now, let $\eta \in \Sigma_{i\alpha_i}$. Then $(\eta(\tau \otimes 1))\sigma = (\eta\tau \otimes 1)\sigma = \eta\tau\hat{\sigma} \otimes 1 = \eta((\tau \otimes 1)\sigma)$, so $M^{(i^{\alpha_i})}$ is in fact a $(k\Sigma_{i\alpha_i},k\Sigma_{\alpha_i})$-bimodule.  
\end{proof}

\begin{rk}
The right module structure is based on the $\ast$-action we encountered in Lemma \ref{lemma: Foulkes module}.
\end{rk}

\begin{lem}\label{lemma: permutation module}
There is an isomorphism $$k\Sigma_{i\alpha_i} \tensorover{k(\Sigma_i \wr \Sigma_{\alpha_i})} k\Sigma_{\alpha_i} \simeq k\Sigma_{i\alpha_i} \tensorover{k\Sigma_{(i^{\alpha_i})}} k$$ of $(k\Sigma_{i\alpha_i},k\Sigma_{\alpha_i})$-bimodules.
\end{lem}

\begin{proof}

We check that the map $$\begin{aligned} \varphi:\, & k\Sigma_{i\alpha_i} \tensorover{k(\Sigma_i \wr \Sigma_{\alpha_i})} k\Sigma_{\alpha_i} &\longrightarrow & \, k\Sigma_{i\alpha_i} \tensorover{k\Sigma_{(i^{\alpha_i})}} k \\& \tau \otimes \sigma &\longmapsto &\, \tau \hat{\sigma} \otimes 1  & \text{for } \hat{\sigma} \in \Sigma_i \wr \Sigma_{\alpha_i} \text{ with } \Pi(\hat{\sigma}d_i) =\sigma\end{aligned}$$ is well-defined. We have seen in Definition/Proposition \ref{defprop} that it is independent of the choice of $\hat{\sigma}$. Let $\Phi$ be the corresponding map $k\Sigma_{i\alpha_i} \times k\Sigma_{\alpha_i} \longrightarrow k\Sigma_{i\alpha_i} \tensorover{k\Sigma_{(i^{\alpha_i})}} k$ with $\Phi(\tau,\sigma)=\tau\hat{\sigma}\otimes 1$ and let $\xi \in \Sigma_i\wr\Sigma_{\alpha_i}$. Then $\Phi(\tau\xi,\sigma)=\tau\xi\hat{\sigma} \otimes 1$ and $\Phi(\tau, \Pi(\xi d_i)\sigma)=\tau (\widehat{\Pi(\xi d_i)\sigma}) \otimes 1$. Since $\hat{\sigma} \in \Sigma_i \wr \Sigma_{\alpha_i}$, we have $$\hat{\sigma}d_i= d_i\sigma$$ which can be easily seen by comparison of the top and bottom rows of the diagrams (in both cases, the permutation induced by propagating lines is $\sigma$). Hence $\Pi(\xi \hat{\sigma}d_i) = \Pi(\xi d_i \sigma) = \Pi(\xi d_i)\sigma$. It follows that $\Phi(\tau\xi,\sigma)=\tau\xi\hat{\sigma}\otimes 1=\tau (\widehat{\Pi(\xi d_i)\sigma}) \otimes 1 =\Phi(\tau,\Pi(\xi d_i)\sigma)$, so $\varphi$ is well-defined.

Let $\pi, \tau \in \Sigma_{i\alpha_i}$ and $\sigma, \eta \in \Sigma_{\alpha_i}$. Then $\pi \varphi(\tau \otimes \sigma) \eta = \pi(\tau \hat{\sigma} \otimes 1)\eta = \pi \tau \hat{\sigma}\hat{\eta} \otimes 1$. Since $\Pi(\hat{\sigma}\hat{\eta}d_i) = \Pi(\hat{\sigma}d_i \eta) = \Pi(\hat{\sigma}d_i)\eta =  \sigma\eta$, we have $\pi \tau \hat{\sigma}\hat{\eta} \otimes 1 = \pi \tau (\widehat{\sigma\eta}) \otimes 1 = \varphi(\pi\tau \otimes \sigma\eta)$, so $\varphi$ is a homomorphism of bimodules. 

Since for any element $\sigma \in \Sigma_{\alpha_i}$ there is an element $\hat{\sigma} \in \Sigma_i \wr \Sigma_{\alpha_i}$ such that $\Pi(\hat{\sigma}d_i)=\sigma$, we can write any element $\tau \otimes \sigma \in \Sigma_{i\alpha_i} \tensorover{\Sigma_i \wr \Sigma_{\alpha_i}} \Sigma_{\alpha_i}$ as $\tau \hat{\sigma} \otimes 1$. This shows that the inverse of $\varphi$ is given by $$\begin{aligned}
\varphi^{-1}:\, & k\Sigma_{i\alpha_i} \tensorover{k\Sigma_{(i^{\alpha_i})}} k & \longrightarrow &\, k\Sigma_{i\alpha_i} \tensorover{k(\Sigma_i \wr \Sigma_{\alpha_i})} k\Sigma_{\alpha_i} \\ &\tau \otimes 1 & \longmapsto &\, \tau \otimes 1
\end{aligned}$$ 
\end{proof}

\begin{cor}\label{cor: prop labellled}
The $(k\Sigma_{l_1},k\Sigma_n)$-bimodule $k\Sigma_{l_1} \tensorover{k\prod_\alpha} k\Sigma_n$ is isomorphic to $$k\Sigma_{l_1} 
\tensorover{k\Sigma_{(\alpha_1,2\alpha_2,...,s\alpha_s)}} \left( (k\Sigma_{\alpha_1} \tensorover{k\Sigma_{(1^{\alpha_1})}} k) \boxtimes ... \boxtimes (k\Sigma_{s\alpha_s} \tensorover{k\Sigma_{(s^{\alpha_s})}} k)\right) \tensorover{k\Sigma_\alpha} k\Sigma_n.$$
\end{cor}

\begin{proof}
Use Proposition \ref{prop: decomposition of labelled parts into small diagrams} and Lemma \ref{lemma: permutation module}.
\end{proof}

We return to the module $k\Sigma_{l_1} \tensorover{k\prod_\alpha} k\Sigma_n$, now regarded only as left $k\Sigma_{l_1}$-module instead of a bimodule. 

\begin{prop}\label{prop: labelled left}
As left $k\Sigma_{l_1}$-module, $k\Sigma_{l_1} \tensorover{k\prod_\alpha} k\Sigma_n$ is isomorphic to a direct sum of copies of $k\Sigma_{l_1} \tensorover{k\Sigma_{(\alpha_1,2\alpha_2,...,s\alpha_s)}} \left((k\Sigma_{\alpha_1} \tensorover{k\Sigma_{(1^{\alpha_1})}} k) \boxtimes ... \boxtimes (k\Sigma_{s\alpha_s} \tensorover{k\Sigma_{(s^{\alpha_s})}} k)\right)$.
\end{prop}

\begin{proof}
By Corollary \ref{cor: prop labellled}, we have the bimodule-isomorphism $$k\Sigma_{l_1} \tensorover{k\prod_\alpha} k\Sigma_n \simeq k\Sigma_{l_1} \tensorover{k\Sigma_{(\alpha_1,2\alpha_2,...,s\alpha_s)}} \left((k\Sigma_{\alpha_1} \tensorover{k\Sigma_{(1^{\alpha_1})}} k) \boxtimes ... \boxtimes (k\Sigma_{s\alpha_s} \tensorover{k\Sigma_{(s^{\alpha_s})}} k)\right) \tensorover{k\Sigma_\alpha} k\Sigma_n.$$ 
If $w \in V_n^{l_1}/_\sim$ has no unlabelled dot, then $U_w \simeq k\Sigma_{l_1} \tensorover{k\prod_\alpha} k\Sigma_n$ by Lemma \ref{lemma: psi}, so $k\Sigma_{l_1} \tensorover{k\prod_\alpha} k\Sigma_n$ is isomorphic to the $(k\Sigma_{l_1},k\Sigma_n)$-bimodule generated by the partition diagram $d_w$ whose top row parts are all connected to a single dot in the bottom row (with one exception, which is connected to a bottom row part of size $r-n+1$, but this is regarded as a single dot as well by the action of $k\Sigma_n$). The action of $k\Sigma_n$ permutes the propagating lines attached to parts of different sizes, while it leaves the top row unchanged. This cannot be obtained by the left $k\Sigma_{l_1}$-action. Hence, $k\Sigma_{l_1} \tensorover{k\prod_\alpha} k\Sigma_n$ decomposes into a direct sum of copies of $$k\Sigma_{l_1} \tensorover{k\Sigma_{(\alpha_1,2\alpha_2,...,s\alpha_s)}} \left((k\Sigma_{\alpha_1} \tensorover{k\Sigma_{(1^{\alpha_1})}} k) \boxtimes ... \boxtimes (k\Sigma_{s\alpha_s} \tensorover{k\Sigma_{(s^{\alpha_s})}} k)\right),$$ one for each configuration of propagating lines which cannot be obtained from the generator by left multiplication, i.e. one for each coset in $\Sigma_\alpha\backslash \Sigma_n$. 
\end{proof}

Corollary \ref{cor: prop labellled} and Proposition \ref{prop: labelled left} prove part (\ref{TMR labelled}) of Theorem \ref{TMR}. 

\begin{cor} \label{cor: labelled specht} The left $k\Sigma_{l_1}$-module
$k\Sigma_{l_1} \tensorover{k\prod_\alpha} k\Sigma_n$ has a filtration by dual Specht modules.
\end{cor}

\begin{proof} It is well known that the Young permutation modules $k\Sigma_{i\alpha_i} \tensorover{k\Sigma_{(i^{\alpha_i})}} k$ have filtrations by dual $k\Sigma_{i\alpha_i}$-Specht modules. The characteristic-free version of the Littlewood-Richardson rule \cite{JP} then shows that the induced module $$k\Sigma_{l_1} \tensorover{k\Sigma_{(\alpha_1, 2\alpha_2, ..., s\alpha_s)}} \left((k\Sigma_{\alpha_1} \tensorover{k\Sigma_{(1^{\alpha_1})}} k) \boxtimes ... \boxtimes (k\Sigma_{s\alpha_s} \tensorover{k\Sigma_{(s^{\alpha_s})}} k )\right)$$ has a dual Specht filtration as well. 
\end{proof}

\begin{prop}\label{prop: right permutation module free} 
$k\Sigma_{i\alpha_i} \tensorover{k\Sigma_{(i^{\alpha_i})}} k$ is free over $k\Sigma_{\alpha_i}$.
\end{prop}

\begin{proof}
The permutation module $k\Sigma_{i\alpha_i} \tensorover{k\Sigma_{(i^{\alpha_i})}} k$ has a basis given by $(i^{\alpha_i})$-tabloids. The right action of $k\Sigma_{\alpha_i}$ permutes the $\alpha_i$ rows of these tabloids. Let $\bar{t}$ be any $(i^{\alpha_i})$-tabloid, then the orbit of $\bar{t}$ under this action is isomorphic to $\Sigma_{\alpha_i}$. In particular, $k\Sigma_{i\alpha_i} \tensorover{k\Sigma_{(i^{\alpha_i})}} k$ is a direct sum of copies of $k\Sigma_{\alpha_i}$, one for each row configuration (in this context, a row configuration fixes which entries belong to the same row, but not which row they belong to). 
\end{proof}

\subsection{Specht filtration of $\boldsymbol{e_l(A/J_{n-1})e_n}$}\label{subsec: eAe Specht}

\begin{prop}[Theorem \ref{TMR}, part (\ref{TMR Specht})]\label{prop: eAe Specht filtered}
Let $\text{\textnormal{char}} k = 0$ or $\text{\textnormal{char}} k > \lfloor \frac{l-n}{3} \rfloor$. Then $e_l(A/J_{n-1})e_n \in \mathcal{F}_l(S)$.
\end{prop}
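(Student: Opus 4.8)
The plan is to assemble the isomorphisms of the previous subsections. Since $e_l(A/J_{n-1})e_n = \bigoplus_{v \in V_n^l/_\sim} U_v$ and $\mathcal{F}_l(S)$ is closed under direct sums, it suffices to prove $U_v \in \mathcal{F}_l(S)$ for each representative $v$.

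First I would fix $v$ and combine Lemma \ref{lemma: psi} with Lemma \ref{lemma: theta} to identify $U_v$, as a left $k\Sigma_l$-module, with
$$\mathrm{Ind}_{\Sigma_{(l_1,l_2)}}^{\Sigma_l}\bigl((k\Sigma_{l_1}\tensorover{k\prod_\alpha} k\Sigma_n)\boxtimes(k\Sigma_{l_2}\tensorover{k\prod_\beta} k)\bigr).$$
Lemma \ref{lemma: phi} shows the left tensor factor is a direct sum of Young permutation modules $M^\gamma$, hence lies in $\mathcal{F}_{l_1}(S)$. For the right tensor factor, Lemma \ref{lemma: tensor product of foulkes modules} presents $k\Sigma_{l_2}\tensorover{k\prod_\beta} k$ as induced from an exterior tensor product of the modules $k\Sigma_{i\beta_i}\tensorover{k(\Sigma_i\wr\Sigma_{\beta_i})} k$ ranging over the sizes $i$ of unlabelled parts of $v$: the $i=1$ factor is the trivial module $k$; the $i=2$ factor lies in $\mathcal{F}_{2\beta_2}(S)$ by \cite[Proposition 8]{P}; and for $i\geq 3$ the factor lies in $\mathcal{F}_{i\beta_i}(S)$ by Corollary \ref{cor: Foulkes Specht}, provided $\text{\textnormal{char}} k = 0$ or $\text{\textnormal{char}} k > \beta_i$. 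I would then invoke the characteristic-free Littlewood--Richardson rule \cite{JP} twice: once to conclude that the induced exterior tensor product $k\Sigma_{l_2}\tensorover{k\prod_\beta} k$ lies in $\mathcal{F}_{l_2}(S)$ (this is Corollary \ref{cor: iterated Foulkes}, with the size-$2$ parts handled separately via \cite{P}), and once to conclude that $\mathrm{Ind}_{\Sigma_{(l_1,l_2)}}^{\Sigma_l}$ applied to the exterior tensor product of a module in $\mathcal{F}_{l_1}(S)$ with one in $\mathcal{F}_{l_2}(S)$ again lies in $\mathcal{F}_l(S)$. This yields $U_v \in \mathcal{F}_l(S)$ under the hypothesis $\text{\textnormal{char}} k > \beta_i$ for every unlabelled part size $i\geq 3$ of $v$.

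The remaining point is to make this bound independent of $v$. Since $l_1 = \sum_i \alpha_i\, i \geq \sum_i \alpha_i = n$, we get $l_2 = l - l_1 \leq l - n \leq r - n$, and from $\sum_i \beta_i\, i = l_2$ it follows that any unlabelled part size $i\geq 3$ has multiplicity $\beta_i \leq l_2/3 \leq (r-n)/3$, hence $\beta_i \leq \lfloor \frac{r-n}{3}\rfloor$. Therefore $\text{\textnormal{char}} k = 0$ or $\text{\textnormal{char}} k > \lfloor \frac{r-n}{3}\rfloor$ forces $\text{\textnormal{char}} k > \beta_i$ for all $i\geq 3$ and all $v$, the previous paragraph applies uniformly, and $e_l(A/J_{n-1})e_n = \bigoplus_v U_v \in \mathcal{F}_l(S)$.

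All the representation-theoretic content is already packaged in the cited lemmas, so the genuinely delicate step is this last worst-case analysis, which is precisely where the numerical hypothesis of Theorem \ref{main theorem} originates; the extremal case is the $v$ with $n$ labelled singletons and $\lfloor\frac{r-n}{3}\rfloor$ unlabelled triples described just before the proposition. Checking that the bound is attained would show it cannot be improved by this argument, but it is not needed for the proof itself.
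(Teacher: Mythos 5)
Your proposal is correct and follows essentially the same route as the paper's proof: decompose $e_l(A/J_{n-1})e_n$ into the summands $U_v$, identify each via Lemmas \ref{lemma: psi}, \ref{lemma: theta}, \ref{lemma: phi} and \ref{lemma: tensor product of foulkes modules} with an induced exterior tensor product of permutation modules and Foulkes modules, handle the size-$2$ parts by \cite{P} and the size-$\geq 3$ parts by Corollary \ref{cor: Foulkes Specht}, and finish with the characteristic-free Littlewood--Richardson rule \cite{JP}. Your explicit worst-case estimate $\beta_i \leq l_2/3 \leq (r-n)/3$ for $i \geq 3$ is just a slightly more detailed version of the paper's observation that the extremal $v$ consists of $n$ labelled singletons and $\lfloor\frac{r-n}{3}\rfloor$ unlabelled triples.
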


\begin{proof}
By Lemmas \ref{lemma: psi} and \ref{lemma: theta}, the summands of $e_l(A/J_{n-1})e_n$ are of the form $$k\Sigma_l \tensorover{k\Sigma_{(l_1,l_2)}} ((k\Sigma_{l_1} \tensorover{k\prod_\alpha} k\Sigma_n) \boxtimes (k\Sigma_{l_2} \tensorover{k\prod_\beta} k)),$$ where $\prod_\alpha \simeq \prod\limits_i (\Sigma_i\wr\Sigma_{\alpha_i})$ and $\prod_\beta \simeq \prod\limits_i (\Sigma_i\wr\Sigma_{\beta_i})$, $l_1=\sum\limits_{i=1}^s i\alpha_i$ and $l_2=\sum\limits_{i=1}^t i\beta_i$. We know that $k\Sigma_{l_1} \tensorover{k\prod_\alpha} k\Sigma_n \in \mathcal{F}_{l_1}(S)$ by Corollary \ref{cor: labelled specht} and $k\Sigma_{l_2} \tensorover{k\prod_\beta} k \in \mathcal{F}_{l_2}(S)$ provided $\text{\textnormal{char}}k = 0$ or $\text{\textnormal{char}}k > \max \beta_i$ by Corollary \ref{cor: iterated Foulkes}. We then apply the characteristic-free version of the Littlewood-Richardson rule \cite{JP} to see that $$k\Sigma_l \tensorover{k\Sigma_{(l_1,l_2)}} ((k\Sigma_{l_1} \tensorover{k\prod_\alpha} k\Sigma_n) \boxtimes (k\Sigma_{l_2} \tensorover{k\prod_\beta} k))$$ lies in $\mathcal{F}_l(S)$. 

Since we want to examine the whole bimodule $e_l(A/J_{n-1})e_n$ and not just one of its summands $U_v$, we have to consider all top row configurations in $V_n^l/_\sim$ simultaneously.

The factor $k\Sigma_{2\beta_2} \tensorover{k(\Sigma_2 \wr \Sigma_{\beta_2})} k$ is the stabilizer of unlabelled parts of size $2$ and it is dual Specht filtered by \cite[Proposition 8]{P} in any characteristic. For $i>2$, the factor  $k\Sigma_{i\beta_i} \tensorover{k(\Sigma_i \wr \Sigma_{\beta_i})} k$ is dual Specht filtered if $\text{\textnormal{char}} k = 0$ or $\text{\textnormal{char}} k > \beta_i$ by Corollary \ref{cor: Foulkes Specht}. The maximal amount of unlabelled parts of a certain size greater than two occurs in the summands $U_v$ where $v$ consists of $n$ labelled singletons and $\lfloor \frac{l-n}{3} \rfloor$ unlabelled parts of size $3$. The remaining $0,1$ or $2$ dots form additional unlabelled parts. Hence, we have to assume $\text{\textnormal{char}}k > \lfloor \frac{l-n}{3} \rfloor$. This lower bound for the characteristic of the field comes from Lemma \ref{lemma: Foulkes module}. It is sufficient, but probably too strong.  
\end{proof}

This concludes the proof of Theorem \ref{TMR}.

\section{Restriction of Cell Modules}\label{sec: result}
We are now able to put the results of the previous section together to obtain criteria for the restriction of a cell module of $P_k(r,\delta)$ to a group algebra of a symmetric group with index $l\leq r$ to admit a dual Specht filtration. This last section is dedicated to the proof of Theorem \ref{main theorem}.

\begin{thm1}
Let $A$ be the partition algebra $P_k(r,\delta)$ with $\delta \neq 0$. Let $0\leq n \leq l \leq r$. Let $\nu$ be a partition of $n$. 
The restriction $e_lAe_n \tensorover{e_nAe_n}S_\nu$ of the cell module $Ae_n\tensorover{e_nAe_n}S_\nu$ to $k\Sigma_l -\mod$ admits a dual Specht filtration if the following holds
\begin{enumerate}
\item $\text{\textnormal{char}} k = 0$ or $\text{\textnormal{char}} k > \lfloor \frac{l-n}{3} \rfloor$ \end{enumerate}
 and
\begin{enumerate}\setcounter{enumi}{1}
\item for all $i \in \{1,...,l\}, \alpha_i \in \{1,...,\lfloor \frac{l}{i}\rfloor\}$ and for all partitions $\lambda^i$ of $\alpha_i$, the generalised Foulkes modules with inner twists $$k\Sigma_{i\alpha_i} \tensorover{k(\Sigma_i \wr \Sigma_{\alpha_i})} S_{\lambda^i} \simeq (k\Sigma_{i\alpha_i} \tensorover{k\Sigma_{(i^{\alpha_i})}} k) \tensorover{k\Sigma_{\alpha_i}} S_{\lambda^i}$$ admit filtrations by dual $k\Sigma_{i\alpha_i}$-Specht modules.
\end{enumerate}
\end{thm1}

We prove this in Subsection \ref{subsec: proof positive} and conjecture that the theorem fails in full generality in Subsection \ref{subsec: proof negative}. In Subsection \ref{subsec: necessity of conditions}, we discuss the assumptions needed for the theorem. 

Our main result is about the restriction $e_l(A/J_{n-1})e_n \tensorover{k\Sigma_n} S_\nu$ of a cell module, where $\nu$ is a partition of $n$, and in the proof of the theorem we will encounter a tensor factor of the form $k\Sigma_{i\alpha_i} \tensorover{k(\Sigma_i \wr\Sigma_{\alpha_i})} S_{\lambda^i}$ where $\lambda^i$ is a partition of $\alpha_i$. This motivates the following lemma about the generalised Foulkes module with inner twists $k\Sigma_{i\alpha_i} \tensorover{k(\Sigma_i \wr \Sigma_{\alpha_i})}  (S_{(i)} \oslash S_{\lambda^i})$, where the product  $X \oslash Y$ of $X \in k\Sigma_i-\mod$ and $Y \in k\Sigma_{\alpha_i}-\mod$ is defined as follows\footnote{see, for example \cite{PagetWildonGeneralisedFoulkes}.}. Let $X^{\boxtimes \alpha_i} = X \boxtimes ... \boxtimes X$ be the $\alpha_i$-fold exterior tensor product of $X$ with itself. Then $\Sigma_{\alpha_i}$ acts on $X^{\boxtimes \alpha_i}$ by place permutation, turning $X^{\boxtimes \alpha_i}$ into a $k(\Sigma_i \wr \Sigma_{\alpha_i})$-module with action $$(\xi_1,...,\xi_{\alpha_i};\zeta)\cdot (x_1\boxtimes ... \boxtimes x_{\alpha_i}):=\xi_1 x_{\zeta^{-1}(1)} \boxtimes ... \boxtimes \xi_{\alpha_i} x_{\zeta^{-1}(\alpha_i)}$$ where $\xi_j \in \Sigma_i$, $x_j \in X$ for $j = 1,...,\alpha_i$ and $\zeta \in \Sigma_{\alpha_i}$. 
Since $k\Sigma_{\alpha_i}$ is a quotient of $k(\Sigma_i \wr \Sigma_{\alpha_i})$, the $k\Sigma_{\alpha_i}$-module $Y$ is also a $k(\Sigma_i \wr \Sigma_{\alpha_i})$-module, which we denote by $\textnormal{Inf}_{\Sigma_{\alpha_i}}^{\Sigma_i \wr \Sigma_{\alpha_i}} Y \simeq k\Sigma_{\alpha_i} \tensorover{k\Sigma_{\alpha_i}} Y$.
Then $X \oslash Y$ is the $k(\Sigma_i \wr \Sigma_{\alpha_i})$-module $X^{\boxtimes \alpha_i} \boxtimes \textnormal{Inf}_{\Sigma_{\alpha_i}}^{\Sigma_i \wr \Sigma_{\alpha_i}} Y$.

\begin{lem} \label{lemma: generalised Foulkes}
The module $k\Sigma_{i\alpha_i} \tensorover{k(\Sigma_i \wr \Sigma_{\alpha_i})} k\Sigma_{\alpha_i} \tensorover{k\Sigma_{\alpha_i}} S_{\lambda^i}$ 
is isomorphic to the \emph{generalised Foulkes module with inner twists} $H_{\lambda^i}^{(i)} = k\Sigma_{i\alpha_i} \tensorover{k(\Sigma_i \wr \Sigma_{\alpha_i})} (S_{(i)} \oslash S_{\lambda^i})$. 
\end{lem}

\begin{rk} In the literature, a \emph{generalised Foulkes module} is defined using Specht modules instead of dual Specht modules. Since dual Specht modules are Specht modules tensored with the sign-module $S^{(1^m)}$, we call the modules in Lemma \ref{lemma: generalised Foulkes} generalised Foulkes modules \emph{with inner twists}.
\end{rk}

\begin{proof}
Applying the definition of the $\oslash$-product to our special case, we get $$S_{(i)} \oslash S_{\lambda^i} = (S_{(i)})^{\boxtimes \alpha_i} \boxtimes \textnormal{Inf}_{\Sigma_{\alpha_i}}^{\Sigma_i \wr \Sigma_{\alpha_i}} S_{\lambda^i} \simeq k^{\boxtimes \alpha_i} \boxtimes (k\Sigma_{\alpha_i} \tensorover{k\Sigma_{\alpha_i}} S_{\lambda^i}).$$ Hence $H_{\lambda^i}^{(i)} \simeq k\Sigma_{i\alpha_i} \tensorover{k(\Sigma_i \wr \Sigma_{\alpha_i})} (k^{\boxtimes \alpha_i} \boxtimes (k\Sigma_{\alpha_i} \tensorover{k\Sigma_{\alpha_i}} S_{\lambda^i})) \simeq k\Sigma_{i\alpha_i} \tensorover{k(\Sigma_i \wr \Sigma_{\alpha_i})} k\Sigma_{\alpha_i} \tensorover{k\Sigma_{\alpha_i}} S_{\lambda^i}$, where the left $k(\Sigma_i \wr \Sigma_{\alpha_i})$-module structure of $k\Sigma_{\alpha_i}$ is induced by the canonical epimorphism $\Sigma_i \wr\Sigma_{\alpha_i} \twoheadrightarrow \Sigma_{\alpha_i}$, $(\xi_1,...,\xi_{\alpha_i};\zeta) \mapsto \zeta$. 
\end{proof}

\subsection{Proof and corollary of Theorem \ref{main theorem}}\label{subsec: proof positive}

As we have seen in Section \ref{sec: restriction cell modules}, the $(k\Sigma_l,k\Sigma_n)$-bimodule $e_l(A/J_{n-1})e_n$ decomposes into $\bigoplus\limits_{v \in V_n^l/_\sim} U_v$, so the restriction $e_l(A/J_{n-1})e_n \tensorover{k\Sigma_n} S_\nu$ of a cell module decomposes into $\bigoplus\limits_{v \in V_n^l/_\sim} (U_v \tensorover{k\Sigma_n} S_\nu)$. Fix $v \in V_n^l/_\sim$. By Lemmas \ref{lemma: psi} and \ref{lemma: theta}, we have $$\begin{aligned}
U_v \tensorover{k\Sigma_n} S_\nu &\simeq k\Sigma_l \tensorover{k\Sigma_{(l_1,l_2)}} \left( (k\Sigma_{l_1} \tensorover{k\prod_\alpha} k\Sigma_n) \boxtimes (k\Sigma_{l_2} \tensorover{k\prod_\beta} k)\right) \tensorover{k\Sigma_n} S_\nu  \\
& \simeq k\Sigma_l \tensorover{k\Sigma_{(l_1,l_2)}} \left( (k\Sigma_{l_1} \tensorover{k\prod_\alpha} k\Sigma_n \tensorover{k\Sigma_n} S_\nu ) \boxtimes (k\Sigma_{l_2} \tensorover{k\prod_\beta} k)\right). 
\end{aligned}$$
By Corollary \ref{cor: iterated Foulkes}, the tensor factor $k\Sigma_{l_2} \tensorover{k\prod_\beta} k$ has a dual $k\Sigma_{l_2}$-Specht filtration, provided the characteristic of the field is large enough.

It remains to check that the tensor factor $k\Sigma_{l_1} \tensorover{k\prod_\alpha} k\Sigma_n \tensorover{k\Sigma_n} S_\nu$ admits a filtration by dual $k\Sigma_{l_2}$-Specht modules. By Proposition \ref{prop: decomposition of labelled parts into small diagrams}, $k\Sigma_{l_1} \tensorover{k\prod_\alpha} k\Sigma_n \tensorover{k\Sigma_n} S_\nu$ is isomorphic to $$\begin{aligned}  k\Sigma_{l_1} 
\tensorover{k\Sigma_{(\alpha_1,2\alpha_2,...,s\alpha_s)}} \left( \overset{s}{\underset{i=1}{\boxtimes}}(k\Sigma_{i\alpha_i} \tensorover{k(\Sigma_i \wr \Sigma_{\alpha_i})} k\Sigma_{\alpha_i}) \right) \tensorover{k\Sigma_\alpha} k\Sigma_n \tensorover{k\Sigma_n} S_\nu \\
\simeq k\Sigma_{l_1} \tensorover{k\Sigma_{(\alpha_1,2\alpha_2,...,s\alpha_s)}} \left( \overset{s}{\underset{i=1}{\boxtimes}}(k\Sigma_{i\alpha_i} \tensorover{k(\Sigma_i \wr \Sigma_{\alpha_i})} k\Sigma_{\alpha_i}) \right) \tensorover{k\Sigma_{\alpha}} S_\nu , \end{aligned}$$ so the action of $k\Sigma_n$ on $S_\nu$ is restricted to $k\Sigma_\alpha=k(\Sigma_{\alpha_1} \times ... \times \Sigma_{\alpha_s})$. Denote this restriction by $\leftidx{^{n}_{\alpha}}{\hspace{-3pt}\downarrow}{}\hspace{-2.5pt} S_\nu$. Then $\leftidx{^{n}_{\alpha}}{\hspace{-3pt}\downarrow}{}\hspace{-2.5pt} S_\nu$ is filtered by modules of the form $S_{\lambda^1} \boxtimes ... \boxtimes S_{\lambda^s}$, where each $S_{\lambda^i}$ is a dual Specht module in $k\Sigma_{\alpha_i}-\mod$. In particular, $$k\Sigma_{l_1} \tensorover{k\Sigma_{(\alpha_1,2\alpha_2,...,s\alpha_s)}} \left( \overset{s}{\underset{i=1}{\boxtimes}}(k\Sigma_{i\alpha_i} \tensorover{k(\Sigma_i \wr \Sigma_{\alpha_i})} k\Sigma_{\alpha_i}) \right) \tensorover{k\Sigma_{\alpha}} S_\nu$$ is filtered by modules of the form $$k\Sigma_{l_1} \tensorover{k\Sigma_{(\alpha_1,2\alpha_2,...,s\alpha_s)}} \left( \overset{s}{\underset{i=1}{\boxtimes}}(k\Sigma_{i\alpha_i} \tensorover{k(\Sigma_i \wr \Sigma_{\alpha_i})} k\Sigma_{\alpha_i}) \right) \tensorover{k\Sigma_{\alpha}} (\overset{s}{\underset{i=1}{\boxtimes}} S_{\lambda^i}).$$
By the action of $\Sigma_{\alpha}$ on the exterior tensor products, this is isomorphic to 
\begin{align*}
& k\Sigma_{l_1} \tensorover{k\Sigma_{(\alpha_1,2\alpha_2,...,s\alpha_s)}} \left( \overset{s}{\underset{i=1}{\boxtimes}}(k\Sigma_{i\alpha_i} \tensorover{k(\Sigma_i \wr \Sigma_{\alpha_i})} k\Sigma_{\alpha_i} \tensorover{k\Sigma_{\alpha_i}} S_{\lambda^i}) \right) \\
\simeq & \, k\Sigma_{l_1} \tensorover{k\Sigma_{(\alpha_1,2\alpha_2,...,s\alpha_s)}} \left( \overset{s}{\underset{i=1}{\boxtimes}}(k\Sigma_{i\alpha_i} \tensorover{k(\Sigma_i \wr \Sigma_{\alpha_i})} S_{\lambda^i}) \right). 
\end{align*}
By assumption \ref{main thm assumption gFm} of Theorem \ref{main theorem}, we have $k\Sigma_{i\alpha_i} \tensorover{k(\Sigma_i \wr \Sigma_{\alpha_i})} S_{\lambda^i} \in \mathcal{F}_{i\alpha_i}(S)$ for all $i = 1,...,s$ and it follows from the characteristic-free version of the Littlewood-Richardson rule \cite{JP} that 
$$k\Sigma_{l_1} \tensorover{k\Sigma_{(\alpha_1,2\alpha_2,...,s\alpha_s)}} \left( \overset{s}{\underset{i=1}{\boxtimes}}(k\Sigma_{i\alpha_i} \tensorover{k(\Sigma_i \wr \Sigma_{\alpha_i})}  S_{\lambda^i}) \right) \in \mathcal{F}_{l_1}(S).$$
This shows that $k\Sigma_{l_1}\tensorover{k\prod_\alpha} k\Sigma_n \tensorover{k\Sigma_n} S_\nu$ admits a filtration by modules which lie in $\mathcal{F}_{l_1}(S)$. This filtration can then be refined to get a filtration by dual Specht modules and thus $k\Sigma_{l_1}\tensorover{k\prod_\alpha} k\Sigma_n \tensorover{k\Sigma_n} S_\nu \in \mathcal{F}_{l_1}(S)$.  Another application of the Littlewood-Richardson rule shows that $U_v \tensorover{k\Sigma_n} S_\nu \in \mathcal{F}_l(S)$.

Considering all summands $U_v$, $v \in V_n^l/_\sim$, simultaneously, we get the assumption $\text{\textnormal{char}}k > \lfloor \frac{l-n}{3} \rfloor$ (or $\text{\textnormal{char}}k = 0$), as in the proof of Proposition \ref{prop: eAe Specht filtered}.

This concludes the proof of Theorem \ref{main theorem}. \qed\\

\begin{lem}[Wildon\footnote{Lemma \ref{lemma: Wildon} is based on an unpublished result by Mark Wildon. I am grateful to Mark Wildon for informing me about this result and its proof, and for giving me permission to include it here.}]\label{lemma: Wildon}
If both $i$ and $\alpha_i$ are strictly less than $\text{\textnormal{char}}k = p$ and $\lambda^i$ is a partition of $\alpha_i$
then the generalised Foulkes module with inner twists $H^{(i)}_{\lambda^i}$ has a filtration by dual Specht modules.
\end{lem}

\begin{proof}
Let $p=\text{\textnormal{char}}k > i$ and $p > \alpha_i$ and let $\lambda^i$ be a partition of $\alpha_i$. Then the Specht module $S^{\lambda^i}$ is a direct summand of the self-dual permutation module $M^{\lambda^i}$, so $S_{\lambda^i}$ is a direct summand of  $M^{\lambda^i}$. It follows that $H^{(i)}_{\lambda^i}= k\Sigma_{i\alpha_i} \tensorover{k(\Sigma_i \wr \Sigma_{\alpha_i})} (S_{(i)} \oslash S_{\lambda^i})$ is a direct summand of $k\Sigma_{i\alpha_i} \tensorover{k(\Sigma_i \wr \Sigma_{\alpha_i})} (M^{(i)} \oslash M^{\lambda^i})=:\textbf{M}^{(i)}_{\lambda^i}$.
By definition, this is isomorphic to $k\Sigma_{i\alpha_i} \tensorover{k(\Sigma_i \wr \Sigma_{\alpha_i})} \left( \underbrace{(M^{(i)} \boxtimes ... \boxtimes M^{(i)})}_{\alpha_i \text{ times}} \boxtimes (k\Sigma_{\alpha_i} \tensorover{k\Sigma_{\alpha_i}} M^{\lambda^i})\right)$. Now $M^{(i)}=k\Sigma_i \tensorover{k\Sigma_i} k \simeq k$ and $M^{\lambda^i} = k\Sigma_{\alpha_i} \tensorover{k\Sigma_{\lambda^i}} k$, so  $\textbf{M}^{(i)}_{\lambda^i} \simeq k\Sigma_{i\alpha_i} \tensorover{k(\Sigma_i \wr \Sigma_{\alpha_i})}\left((\underbrace{k \boxtimes ... \boxtimes k}_{\alpha_i \text{ times}}) \boxtimes (k\Sigma_{\alpha_i} \tensorover{k\Sigma_{\alpha_i}} k\Sigma_{\alpha_i} \tensorover{k\Sigma_{\lambda^i}} k)\right) \simeq k\Sigma_{i\alpha_i} \tensorover{k(\Sigma_i \wr \Sigma_{\alpha_i})}\left((\underbrace{k \boxtimes ... \boxtimes k}_{\alpha_i \text{ times}}) \boxtimes (k\Sigma_{\alpha_i} \tensorover{k\Sigma_{\lambda^i}} k)\right).$ The wreath product $\Sigma_i \wr \Sigma_{\alpha_i} \subseteq \Sigma_{i\alpha_i}$ acts on $k\boxtimes ... \boxtimes k$ via place permutation, but since all tensor factors are $k$ we can move everything across the tensor signs and ignore the place permutation. Then $\Sigma_i \wr \Sigma_{\alpha_i} \subseteq \Sigma_{i\alpha_i}$ acts trivially on $k\boxtimes ... \boxtimes k$ and via the canonical epimorphism $\Sigma_i \wr \Sigma_{\alpha_i} \twoheadrightarrow \Sigma_{\alpha_i}$ on $k\Sigma_{\alpha_i}\tensorover{k\Sigma_{\lambda^i}} k$. It follows that $\textbf{M}^{(i)}_{\lambda^i} \simeq k\Sigma_{i\alpha_i} \tensorover{k(\Sigma_i \wr \Sigma_{\alpha_i})}  k\Sigma_{\alpha_i} \tensorover{k\Sigma_{\lambda^i}} k$, where $k(\Sigma_i\wr \Sigma_{\alpha_i})$ acts on $k\Sigma_{\alpha_i}$ via the canonical epimorphism.
 
 This is a permutation module induced (in multiple steps) from subgroups without any $p$-elements, so $H^{(i)}_{\lambda^i}$ is projective. Therefore, $H^{(i)}_{\lambda^i}$ has a dual Specht filtration. 
\end{proof}

With the help of Lemma \ref{lemma: Wildon}, we can reformulate assumption (\ref{main thm assumption gFm}) of Theorem \ref{main theorem} in terms of the characteristic of the field. If $\alpha_i = 1$ then $H^{(i)}_{\lambda^i} \simeq k \in \mathcal{F}_{i\alpha_i}(S)$. The maximal size $i$ such that $\alpha_i > 1$ is $i= \lfloor \frac{l}{2} \rfloor$ and the maximal value of the integers $\alpha_i$ is  $\lfloor \frac{l}{2} \rfloor$ (for $i=2$). 
Hence we can simplify assumption (\ref{main thm assumption gFm}) and demand that $\text{\textnormal{char}}k > \lfloor\frac{l}{2}\rfloor$. This assumption might be stronger than the original assumption, but less complicated, so it is worth stating the following version of Theorem \ref{main theorem} as a corollary.

\begin{cor}\label{cor: thm 1}
Let $A$ be the partition algebra $P_k(r,\delta)$, $\delta \neq 0$ and let $0\leq n \leq l \leq r$. Let $\nu$ be a partition of $n$. 
The restriction $e_lAe_n \tensorover{e_nAe_n}S_\nu$ of the cell module $Ae_n\tensorover{e_nAe_n}S_\nu$ to $k\Sigma_l -\mod$ admits a dual Specht filtration if $\text{\textnormal{char}} k = 0$ or $\text{\textnormal{char}} k > \lfloor \frac{l}{2} \rfloor$.
\end{cor}

\subsection{Non-existence of dual Specht filtrations}\label{subsec: proof negative} 
In this subsection, we discuss why the theorem may not hold in full generality. 

\begin{lem}\label{lemma: rcm in F implies gFm in F} 
Let $\text{\textnormal{char}}k \geq 5$ and let $0 \leq n \leq l \leq r$ with $l=mn$. If $e_lAe_n \tensorover{e_nAe_n} S_\nu$ admits a filtration by dual Specht modules, then so does $H^{(m)}_\nu$. 
\end{lem}

\begin{proof} Let $e_lAe_n \tensorover{e_nAe_n} S_\nu \in \mathcal{F}_l(S)$. If $\text{\textnormal{char}}k \geq 5$, it follows from \cite[Theorem 3.6.1]{HN} that the category $\mathcal{F}_l(S)$ is closed under taking direct summands. So for each top row configuration $v \in V_n^l/_\sim$ we must have $U_v \tensorover{k\Sigma_n} S_\nu \in \mathcal{F}_l(S)$. Since $l=mn$, one summand is indexed by $$w:= \xymatrixcolsep{5pt}\xymatrix{\circ \tra[r] &  ... \tra[r] & \circ & \circ \tra[r] &  ... \tra[r] & \circ & ... & \circ \tra[r] &  ... \tra[r] & \circ} \in V_n^l,$$ the partial diagram consisting of exactly $n$ labelled parts of size $m$. From Lemma \ref{lemma: psi}, Proposition \ref{prop: decomposition of labelled parts into small diagrams} and Lemma \ref{lemma: generalised Foulkes}, we know that $U_w \tensorover{k\Sigma_n} S_\nu \simeq k\Sigma_l \tensorover{k(\Sigma_m \wr \Sigma_n)} k\Sigma_n \tensorover{k\Sigma_n} S_\nu \simeq H^{(m)}_\nu$. Hence $H^{(m)}_\nu \in \mathcal{F}_{mn}(S)$.
\end{proof}

Lemma \ref{lemma: rcm in F implies gFm in F} provides an idea how to create counterexamples for the general statement.

\begin{thm neg} Let $A= P_k(r,\delta)$ with $\delta \neq 0$. There are $0 \leq n \leq l \leq r$ and partitions $\nu$ of $n$ such that the restriction $e_lAe_n\tensorover{e_nAe_n}S_\nu$ of the cell module $Ae_n \tensorover{e_nAe_n} S_\nu$ does not admit a dual Specht filtration.  
\end{thm neg}

\noindent\emph{Idea of proof.}
By Lemma \ref{lemma: rcm in F implies gFm in F}, finding a generalised Foulkes module with inner twists $H^{(m)}_\nu$ which does not admit a dual Specht filtration if $\text{\textnormal{char}}k = p \geq 5$ creates a counterexample to $e_{mn}Ae_n \tensorover{e_nAe_n} S_\nu \in \mathcal{F}_{mn}(S)$. 

In the literature, it is common to use dual Specht modules to construct cell modules for Brauer or partition algebras, e.g. in \cite{HP}, \cite{HHKP}. This is why we decided to use dual Specht modules as well. However, in the literature about Foulkes modules, it is common to use Specht modules. A proper proof for the general non-existence of dual Specht filtrations would need a counterexample for our generalised Foulkes module with inner twists. We give a counterexample using the usual Specht and generalised Foulkes modules and expect that a similar counterexample exists for the dual case. 

 Let $\text{\textnormal{char}}k = 5$. The generalised Foulkes module (without inner twists) $H_{(1^5)}^{(2)}=k\Sigma_{10} \tensorover{k(\Sigma_2 \wr \Sigma_5)} (S^{(2)} \oslash S^{(1^5)} )$ has a non-projective indecomposable summand $M$ of the form \begin{center}
$ D^{(5,4,1)} \oplus D^{(7,1^3)}$ \\ $D^{(10)} \oplus D^{(8,1^2)} \oplus D^{(6,1^4)} \oplus D^{(5,3,1^2)} \oplus D^{(4,2)}$ \\ $D^{(5,4,1)} \oplus D^{(7,1^3)} $ \end{center}
 as shown by deBoeck in her PhD thesis \cite[Theorem 8.4.8]{deBoeck}, where $D^\lambda$ denotes the simple module corresponding to the partition $\lambda$ and the upper row is the head of $M$, the bottom row its socle. 

Assume that $M$ has a Specht filtration. Then every composition factor must appear in one of the Specht modules of the filtration and the Specht modules appearing in the filtration may not have any other composition factor. $M$ has two simple quotients, namely $ D^{(5,4,1)}$ and $D^{(7,1^3)}$, which must be simple quotients of Specht modules in the filtration as well.

Compute the Specht modules for $\Sigma_{10}$ (e.g. using the hecke package in GAP) and cross out those with a composition factor which is not a composition factor of $M$. The remaining Specht modules are $S^{(10)}$, $S^{(7,1^3)}$, $S^{(6,1^4)}$, $S^{(5,3,1^2)}$ and $S^{(4^2,2)}$. But none of them has simple quotient $D^{(5,4,1)}$. This shows that $M$ and thus $H^{(1^5)}_{(2)}$ does not admit a Specht filtration.

\subsection{On the necessity of the assumptions in Theorem \ref{main theorem}}\label{subsec: necessity of conditions}
Let $\text{\textnormal{char}}k \geq 5$ or $\text{\textnormal{char}}k =0$. 
In Subsection \ref{subsec: proof negative}, we have seen that it is necessary that the generalised Foulkes module with inner twists $H^{(m)}_\nu$ with $mn=l$ admits a dual Specht filtration in order to have that $e_lAe_n \tensorover{e_nAe_n} S_\nu$ admits a dual Specht filtration. This shows that we cannot remove assumption (\ref{main thm assumption gFm}) from Theorem \ref{main theorem} altogether. 
A similar statement concerning usual Foulkes modules explains the assumption (\ref{main thm assumption char}) from Theorem \ref{main theorem}.


If $n=0$ and $l=am$ then $H^{(a^m)}$ is a direct summand of $e_lAe_0$, so in this case we need $H^{(a^m)} \in \mathcal{F}_l(S)$. 
If $\text{\textnormal{char}}k > m$, we know for sure that $H^{(a^m)} \in \mathcal{F}_l(S)$ by Corollary \ref{cor: Foulkes Specht}, but this assumption might be too strong. If $a>2$, the maximal $m$ with $am = l$ is for $a=3$, so $m=\frac{l}{3}$. This shows that assumption (\ref{main thm assumption char}) in Theorem \ref{main theorem} is necessary for $n=0$. 

All other (generalised) Foulkes modules appear as proper exterior tensor factors. It is possible that there are summands $U_v \tensorover{k\Sigma_n} S_\nu$ which are dual Specht filtered even if their tensor factors are not, since the Littlewood-Richardson rule only works in one direction. In general, it may be possible to relax assumption (\ref{main thm assumption char}), but not to drop it completely. Assumption (\ref{main thm assumption gFm}) also cannot be dropped completely, but further insight into generalised Foulkes modules (with inner twists) may allow to verify it under relatively weak assumptions in the future.  \\

\section*{Acknowledgements}
I am grateful to Steffen Koenig for his time, useful discussions and proof reading of this article, to Mark Wildon for giving me his notes on Specht filtrations of generalised Foulkes modules, to Eugenio Giannelli for discussions on Foulkes modules and to Rowena Paget for discussions on generalised Foulkes modules. Furthermore, I would like to thank the reviewer for carefully checking my submissions and providing helpful comments to improve the article.  This article is mostly based on my PhD thesis, which was partly funded by DFG-SPP 1489.  

\bibliographystyle{halpha}
\bibliography{referencesPartitionAlgebra}
\end{document}